\documentclass{amsart}

\usepackage{hyperref}
\usepackage{amsmath}
\usepackage{amssymb}
\usepackage[capitalize]{cleveref}
\usepackage{amsthm}
\usepackage{todonotes}
\usepackage[backend=biber,
           style=alphabetic,
            url=false
           ]{biblatex}

\usepackage{color}
\usepackage{setspace}
\usepackage{tikz-qtree}
\usepackage{thmtools}
\usepackage{csquotes}
\addbibresource{mybib.bib}
\newtheorem{theorem}{Theorem}
\newtheorem{lemma}[theorem]{Lemma}

\newtheorem{corollary}[theorem]{Corollary}

\theoremstyle{definition}
\newtheorem{definition}[theorem]{Definition}

\theoremstyle{remark}


\newcommand{\A}{\mathcal{A}}

\newcommand{\mc}[1]{\mathcal{#1}}

\newcommand{\ra}{\rightarrow}
\newcommand{\lr}{\leftrightarrow}
\newcommand{\La}{\Leftarrow}
\newcommand{\Ra}{\Rightarrow}
\newcommand{\peql}{\preceq^{\mc L}}

\newcommand{\peqai}{\preceq^{\mc A_i}}

\DeclareMathOperator{\restrict}{\upharpoonright}

\renewcommand{\restrict}{\upharpoonright}
\renewcommand{\phi}{\varphi}

\usepackage{soul}

\newbox\gnBoxA
\newdimen\gnCornerHgt
\setbox\gnBoxA=\hbox{$\ulcorner$}
\global\gnCornerHgt=\ht\gnBoxA
\newdimen\gnArgHgt
\def\gnmb #1{%
\setbox\gnBoxA=\hbox{$#1$}%
\gnArgHgt=\ht\gnBoxA%
\ifnum     \gnArgHgt<\gnCornerHgt \gnArgHgt=0pt%
\else \advance \gnArgHgt by -\gnCornerHgt%
\fi \raise\gnArgHgt\hbox{$\ulcorner$} \box\gnBoxA %
\raise\gnArgHgt\hbox{$\urcorner$}}

\newcommand{\goed}[1]{\ulcorner #1\urcorner}
\newcommand{\PA}{\mathsf{PA}}
\newcommand{\C}{\psi}
\newcommand{\Prm}[1]{\mathrm{Prv}_{#1}}
\newcommand{\kommentar}[1]{}
\newcommand{\F}{\theta}
\newcommand{\E}{\chi}

\title{Feferman's completeness theorem}
\author[Pakhomov]{Fedor Pakhomov}
\author[Rathjen]{Michael Rathjen}
\author[Rossegger]{Dino Rossegger}
\address[Pakhomov]{Vakgroep Wiskunde: Analyse, Logica En Discrete Wiskunde, 
    Universiteit Gent and 
    Steklov Mathematical Institute of Russian Academy of Sciences, Moscow}
    \email{\href{mailto:fedor.pakhomov@ugent.be}{fedor.pakhomov@ugent.be}}
    \urladdr{\url{https://homepage.mi-ras.ru/~pakhfn/}}

\address[Rathjen]{School of Mathematics, University of Leeds}
\email{\href{mailto:m.rathjen@leeds.ac.uk}{m.rathjen@leeds.ac.uk}}
\urladdr{\url{http://www1.maths.leeds.ac.uk/~rathjen/rathjen.html}}

\address[Rossegger]{Institut f\"ur Diskrete Mathematik und Geometrie, 
  Technische Universit\"at Wien
  }
\email{\href{mailto:dino.rossegger@tuwien.ac.at}{dino.rossegger@tuwien.ac.at}}
\urladdr{\url{https://drossegger.github.io/}}

\date{\today}
\thanks{The work of Pakhomov was funded by the FWO grant G0F8421N. 
The work of Rathjen was supported by the John Templeton Foundation Grant ID
60842 and by the European Union (ERC Advanced Grant, C-FORS: Construction in the Formal Sciences, awarded to {\O}ystein Linnebo, project number 101054836). The work of Rossegger was supported by the European Union's Horizon 2020 Research and Innovation Programme under the Marie Sk\l{}odowska-Curie grant agreement No. 101026834 — ACOSE}
\subjclass{03F15, 03F30, 03C57}
\begin{document}
\maketitle
\begin{abstract}
    Feferman proved in 1962~\cite{feferman1962} that any arithmetical theorem is
    a consequence of a suitable transfinite iteration of full uniform reflection
    of $\mathsf{PA}$. This result is commonly known as Feferman's completeness
    theorem. The purpose of this paper is twofold. On the one hand this
    is an expository paper, giving two new proofs of Feferman's completeness
    theorem that, we hope, shed light on this mysterious and often overlooked
    result. 
    On the other hand, we combine one of our proofs with results from computable
    structure theory due to Ash and Knight to give sharp bounds on the order
    types of well-orders necessary to attain the completeness for levels of the arithmetical hierarchy.
\end{abstract}
\section{Introduction}
By G\"odel's Second Incompleteness Theorem no consistent extension of
$\mathsf{PA}$ with a computably enumerable axiomatization can prove its own
consistency. Thus, whenever we extend a sound arithmetical theory $T$ to a sound
theory $T'$ that proves the consistency of $T$, it is guaranteed that
$T'$ is stronger than $T$.
 The idea of using this
phenomenon to attain transfinite sequences of theories of ascending strength
 goes back to Alan Turing~\cite{turing1939}. In modern terms, Turing
defined transfinite iterations of local reflection
$\mathsf{Rfn}$ roughly as 
\[\mathsf{Rfn}^0(T)=T\qquad\mathsf{Rfn}^{\alpha+1}(T)=T+\mathsf{Rfn}(\mathsf{Rfn}^\alpha(T))\qquad \mathsf{Rfn}^\lambda(T)=\bigcup\limits_{\alpha<\lambda}\mathsf{Rfn}^\alpha(T).\]
Here, for a theory $T$ with a fixed c.e.\ axiomatization, $\mathsf{Rfn}(T)$ is the collection of all principles
\[\mathsf{Prv}_T(\gnmb{\varphi})\to \varphi\text{, where $\varphi$ is any arithmetical sentence.}\]
A subtle feature of this definition is that at each step of the inductive definition, one has to pick a c.e.\ axiomatization of $\mathsf{Rfn}^\alpha(T)$.
Thus $\alpha$ has to be a computable ordinal with a fixed computable
presentations, hence the theories $\mathsf{Rfn}^\alpha(T)$ depend not only on the order-type of
$\alpha$ but also on the particular computable presentation. As was proven already by Turing, for any true $\Pi_1$-sentence $\phi$ there is a
computable presentation $\alpha$ of the ordinal $\omega+1$, in fact an ordinal
notation in Kleene's $\mathcal{O}$, such that
$\mathsf{Rfn}^\alpha(\mathsf{PA})$ proves $\phi$. Of course, there cannot be a single
computable presentation $\alpha$ such that $\mathsf{Rfn}^\alpha(\mathsf{PA})$ proves 
every true $\Pi_1$ sentence, as this would mean that there is a consistent c.e.\
extension of $\mathsf{PA}$ proving all true $\Pi_1$ statements. Thus, the
construction of theories $\mathsf{Rfn}^\alpha(\mathsf{PA})$ has an intensional character: Their consequences depend not only on the order type of $\alpha$ but also on the choice of a particular computable presentation.

Turing also conjectured that for every arithmetical theorem $\phi$, there is a computable
presentation of an ordinal $\alpha$ such that $\mathsf{Rfn}^\alpha(\mathsf{PA})$ proves $\phi$. Later,
Feferman~\cite{feferman1962} refuted this conjecture. However, he considered a variant of
Turing's progression, where local reflections $\mathsf{Rfn}(T)$ are replaced with uniform reflections $\mathsf{RFN}(T)$:
\[\forall x \big(\mathsf{Prv}_T(\gnmb{\varphi(\dot x)})\to \varphi(x)\big)\text{, where $\varphi(x)$ is any arithmetical formula.}\]
For this variant, Feferman proved that Turing's conjecture holds,
i.e., any true arithmetical sentence is provable in
$\mathsf{RFN}^\alpha(\mathsf{PA})$. This result is often referred to as
\emph{Feferman's completeness theorem}. Feferman's construction gives
$\omega^{{\omega}^{\omega+1}}$ as an upper bound on the order type of the ordinal
notations along which one needs to iterate reflection to obtain a proof of
$\phi$.

The presence of an upper bound in Feferman's result indicates that it is a negative result: It essentially shows that proof-theoretic strength of an
arithmetical sentence $\varphi$ couldn't be appropriately measured by the least
order-type of ordinal notation, iteration of reflection along which proves
$\varphi$. However, when restricted to natural ordinal notation systems the length
of iterations of reflection necessary to obtain a particular theory does reflect
the proof-theoretic strength of the theory. This phenomenon was explored and used
for the applications to ordinal analysis \cite{schmerl1979fine,beklemishev2003proof}. 
We furthermore note that as was shown by the first author and James Walsh
\cite{pakhomov2021reflection}, for systems of second-order arithmetic and their
 $\Pi^1_1$-consequences, the strength of the systems
can be appropriately measured in terms of the length of iterations of
$\Pi^1_1$-reflection, which is a measure closely connected with the $\Pi^1_1$-proof
theoretic ordinals of theories. 

Feferman's original proof of his completeness theorem is not widely known, perhaps because of its considerable technical difficulty. One feature that Feferman's proof shares with Turing's (rather simple) proof of the $\Pi^0_1$-completeness of consistency progressions is the employment of 
non-standard definitions of the axioms of theories. These non-standard definitions are engineered so that we know them to be an axiomatization of a ``natural'' theory only if we know a certain sentence $\varphi$ to be true. In Feferman's proof, they appear in ever more intertwined ways. 
 Very artfully, he applies increasingly more entangled versions of the recursion theorem to generate non-standard definitions of theories (see \cite{franzen2004} for further details).
 Franz{\'e}n \cite[p. 387]{franzen2004} expressed this as follows: 
 \begin{displayquote}{\em The completeness theorem [i.e., Feferman's] can be seen as a dramatic illustration of the role of intensionality in logic.}
 \end{displayquote}

In this paper, we present three new proofs of Feferman's completeness theorem.
The first is a simple proof of this theorem using results not available to
Feferman at the time such as the conservativity of $\mathsf{ACA_0}$ over
$\mathsf{PA}$. The second proof is an alternative proof using techniques that
Feferman could have had access to, such as Schütte's completeness theorem for
$\omega$-logic. At last, we combine our first proof with results of Ash and
Knight~\cite{ash1990} in computable structure theory to obtain precise bounds on
the order types appearing in Feferman's theorem. 

\subsection{Structure of the paper}
The paper is structured as follows. First, in \cref{Feferman_section} we give a
fully self-contained presentation of our simple proof of Feferman's completeness
theorem.
The proof is split into three well-understood steps:
\begin{enumerate}
    \item For any arithmetical sentence $\varphi$ there is a computable order $\mc L$ such that \[\mathsf{ACA}_0\vdash \mathsf{WO}(\mc L)\mathrel{\leftrightarrow} \varphi\;\;\;\text{and}\;\;\; \mathsf{ACA}_0\vdash \mathsf{LO}(\mc L).\] 
    
    \item For any computable linear order $\mc L$ such that
        $\mathsf{ACA}_0\vdash \mathsf{LO}(\mc L)$ the theories
        $\mathsf{ACA}_0+\mathsf{WO}(\mc L)$ and $\mathsf{PA}+\mathsf{TI}(\mc L)$ prove exactly the same first-order sentences.

    \item As we prove in Lemma \ref{reflection_implies_TI}
    \[\mathsf{RFN}^{\mathcal{L}+1}(\mathsf{PA})\vdash \mathsf{TI}(\mc L).\]
    
\end{enumerate}

The ideas of the proofs of these components are the following:
\begin{enumerate}
    \item     This is a particular case of $\mathsf{ACA}_0$-provable $\Pi^1_1$-completeness of well-orders. For the convenience of readers, we give a standard proof of the fact (Lemma \ref{arith-completeness}).

\item     We prove this in Lemma \ref{sen2wo} by showing that the extension of any model of $\mathsf{PA}+\mathsf{LO}(\mc L)+\mathsf{TI}(\mc L)$ by the second-order universe consisting of first-order definable sets is a model of  $\mathsf{ACA}_0+\mathsf{WO}(\mc L)$.

\item Like most other facts about the iterations of reflection principles, this
    fact is straightforward to prove using L\"ob's Theorem.
\end{enumerate}
We note that neither of these facts is actually new. The first goes back to
Kleene and the formalized version can be found in \cite{simpson2009}. The second fact is a triviality, while the third one is folklore. 

In \cref{sec:history} we give a proof of Feferman's completeness theorem using
techniques available at the time and in \cref{upperbounds_sect,lowerbounds_sect} we obtain sharp ordinal bounds for Feferman's
completeness theorem. 

\begin{restatable}{theorem}{upperbound}\label{upperbound} For every true $\Pi_{2n+1}$ sentence $\varphi$ there exists a computable $\mathsf{PA}$-verifiable linear order $\mc L\cong \omega^n+1$ such that
  \[\mathsf{RFN}^{\mc L}(\mathsf{PA})\vdash \varphi.\]
\end{restatable}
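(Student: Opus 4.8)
The plan is to combine the three-step decomposition laid out in the introduction with a careful tracking of ordinal bounds through each step, specializing to the case of $\Pi_{2n+1}$ sentences. Given a true $\Pi_{2n+1}$ sentence $\varphi$, the first task is to produce a computable linear order $\mc L$ with $\mc L \cong \omega^n + 1$ such that $\mathsf{ACA}_0 \vdash \mathsf{WO}(\mc L) \leftrightarrow \varphi$ and $\mathsf{ACA}_0 \vdash \mathsf{LO}(\mc L)$. The generic construction from step (1) (Lemma~\ref{arith-completeness}) gives such an order for \emph{any} arithmetical $\varphi$, but with no control on the order type. The key refinement is to exploit the quantifier structure of a $\Pi_{2n+1}$ sentence: each block of quantifiers in the arithmetical hierarchy should contribute one factor of $\omega$ to the order type, so that $n$ alternating blocks (stripping the innermost decidable matrix) yield a well-order of type $\omega^n$, with the $+1$ accounting for a top element. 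This is precisely where I expect the results of Ash and Knight~\cite{ash1990} on back-and-forth relations and the complexity of well-order constructions to enter: the fine calibration of order type against quantifier complexity is a computable-structure-theoretic phenomenon, and the bound $\omega^n + 1$ should fall out of their machinery applied to the standard $\Pi^1_1$-completeness construction.

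Once such an $\mc L$ is in hand, the remaining two steps transfer the work into the reflection hierarchy. Since $\varphi$ is true, $\mathsf{WO}(\mc L)$ holds in the standard model, so $\mc L$ is genuinely well-ordered of type $\omega^n + 1$. By step (2) (Lemma~\ref{sen2wo}), because $\mathsf{ACA}_0 \vdash \mathsf{LO}(\mc L)$, the theories $\mathsf{ACA}_0 + \mathsf{WO}(\mc L)$ and $\mathsf{PA} + \mathsf{TI}(\mc L)$ prove the same first-order sentences; in particular, since $\mathsf{ACA}_0 \vdash \mathsf{WO}(\mc L) \leftrightarrow \varphi$, the theory $\mathsf{PA} + \mathsf{TI}(\mc L)$ proves $\varphi$. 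Then by step (3) (Lemma~\ref{reflection_implies_TI}), $\mathsf{RFN}^{\mc L + 1}(\mathsf{PA}) \vdash \mathsf{TI}(\mc L)$, and hence $\mathsf{RFN}^{\mc L + 1}(\mathsf{PA}) \vdash \varphi$. Chaining these, $\mathsf{RFN}^{\mc L+1}(\mathsf{PA})$ proves $\varphi$. The final bookkeeping step is to absorb the ``$+1$'' and reconcile the order type: since $\mc L \cong \omega^n + 1$, we have $\mc L + 1 \cong \omega^n + 2$, so one either restates the conclusion with the order type along which one iterates being $\omega^n + 2$, or, exploiting that $\omega^n + 1$ already carries a top element to absorb the reflection step, arranges the construction so that iterating along $\mc L \cong \omega^n + 1$ itself suffices, matching the theorem as stated.

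I would carry out the argument in exactly this order: first the construction of $\mc L$ with controlled order type (invoking the Ash--Knight bounds), then the conservativity transfer via steps (2) and (3), and finally the ordinal bookkeeping. The main obstacle is the first step, namely obtaining an order type of \emph{exactly} $\omega^n + 1$ rather than merely some computable well-order. The naive $\Pi^1_1$-completeness construction embeds the truth of $\varphi$ into well-foundedness but is wasteful in order type, typically producing something far larger. Achieving the sharp bound $\omega^n$ requires a construction tailored to the $2n+1$ alternating quantifiers, in which each successive quantifier block is handled by a uniform sequence of suborders glued together to add exactly one power of $\omega$, and verifying that this gluing is $\mathsf{PA}$-verifiable (i.e.\ $\mathsf{PA}$ proves it is a linear order) while still equivalent to $\varphi$ over $\mathsf{ACA}_0$. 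Establishing that the back-and-forth complexity governing the Ash--Knight construction aligns with the arithmetical complexity of $\varphi$ so as to yield precisely $\omega^n$, and no more, is the technical heart of the proof; the later steps are essentially the already-established lemmas applied in sequence.
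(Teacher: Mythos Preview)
Your approach is essentially the same as the paper's, and the overall structure is correct. The only slip is in the bookkeeping of the ``$+1$,'' which you yourself flag as the final obstacle but resolve backwards. The Ash--Knight result (Corollary~\ref{sen2wo} in the paper) produces, for a true $\Pi_{2n+1}$ sentence $\varphi$, a $\mathsf{PA}$-verifiable computable order $\mc L_1 \cong \omega^n$ (not $\omega^n + 1$) with $\mathsf{ACA}_0 \vdash \mathsf{WO}(\mc L_1) \leftrightarrow \varphi$. Steps (2) and (3) then run exactly as you describe: conservativity gives $\mathsf{PA} + \mathsf{TI}(\mc L_1) \vdash \varphi$, and one \emph{then} defines $\mc L$ to be $\mc L_1$ with a fresh top element adjoined, so $\mc L \cong \omega^n + 1$. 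Lemma~\ref{reflection_implies_TI} now yields $\mathsf{RFN}^{\mc L}(\mathsf{PA}) \vdash \mathsf{TI}(\mc L_1)$ directly, matching the theorem as stated with no leftover ``$+2$.'' So the $+1$ is not something to be absorbed from the Ash--Knight order; it is added afterwards precisely to accommodate the reflection step.
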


To prove Theorem \ref{upperbound} we replace the use of $\Pi^1_1$-completeness
of computable well-orders with a sharper theorem of Ash and Knight
\cite{ash1990}, who proved that the set of indices of computable orderings of order-type $\omega^n$ is $\Pi^0_{2n+1}$-hard.

We complement Theorem \ref{upperbound} with the following result proving that the upper bound on order-types is in fact sharp:

\begin{restatable}{theorem}{lowerbound}\label{lowerbound} There exists a true
    $\Pi_{2n}$-sentence $\varphi$ such that for any computable
    $\mathsf{PA}$-verifiable linear order $\mc L$ of order-type at most $\omega^n$
  \[\mathsf{RFN}^{\mc L}(\mathsf{PA})\nvdash \varphi.\]
\end{restatable}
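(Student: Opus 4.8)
The plan is to derive the theorem from a single complexity estimate. Write $\mathrm{Prov}_{\le\omega^n}$ for the set of (G\"odel numbers of) arithmetical sentences $\varphi$ for which there is a computable $\mathsf{PA}$-verifiable linear order $\mc L$ with $\mathrm{otype}(\mc L)\le\omega^n$ and $\mathsf{RFN}^{\mc L}(\mathsf{PA})\vdash\varphi$. Any such $\mc L$ is genuinely well-founded, so by the standard soundness of full uniform reflection iterated along a well-order over the sound base $\mathsf{PA}$, every member of $\mathrm{Prov}_{\le\omega^n}$ is true. The entire theorem then reduces to the claim
\[\mathrm{Prov}_{\le\omega^n}\cap\Pi_{2n}\in\Sigma^0_{2n}.\]
Indeed, the set of true $\Pi_{2n}$ sentences is $\Pi^0_{2n}$-complete, hence not $\Sigma^0_{2n}$; since $\mathrm{Prov}_{\le\omega^n}\cap\Pi_{2n}$ is a $\Sigma^0_{2n}$ subset of the true $\Pi_{2n}$ sentences, the two cannot coincide, so some true $\Pi_{2n}$ sentence lies outside $\mathrm{Prov}_{\le\omega^n}$, which is exactly the $\varphi$ we want.

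Next I would record the \emph{naive} estimate and see why it is insufficient. We have $\varphi\in\mathrm{Prov}_{\le\omega^n}$ iff $\exists e\,[\mc L_e\text{ is a }\mathsf{PA}\text{-verifiable LO}\wedge\mathrm{otype}(\mc L_e)\le\omega^n\wedge\mathsf{RFN}^{\mc L_e}(\mathsf{PA})\vdash\varphi]$. Here provability and $\mathsf{PA}$-verifiability are $\Sigma^0_1$ in $e$, but the clause $\mathrm{otype}(\mc L_e)\le\omega^n$ is only $\Pi^0_{2n+1}$ (matching the Ash--Knight fact that $\mc L_e\cong\omega^n$ is $\Pi^0_{2n+1}$-complete), so this bounds $\mathrm{Prov}_{\le\omega^n}$ merely by $\Sigma^0_{2n+2}$: two levels too weak. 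The heart of the argument is therefore to replace the exact clause $\mathrm{otype}(\mc L_e)\le\omega^n$ by a $\Sigma^0_{2n}$ over-approximation $\Psi(e)$ such that (a) $\Psi(e)$ holds whenever $\mathrm{otype}(\mc L_e)\le\omega^n$, and (b) $\Psi(e)$ already guarantees that $\mathsf{RFN}^{\mc L_e}(\mathsf{PA})$ is $\Pi_{2n}$-sound. Granting such a $\Psi$, the set $\{\varphi\in\Pi_{2n}:\exists e\,(\Psi(e)\wedge\mathsf{RFN}^{\mc L_e}(\mathsf{PA})\vdash\varphi)\}$ is $\Sigma^0_{2n}$, it contains $\mathrm{Prov}_{\le\omega^n}\cap\Pi_{2n}$ by (a), and it consists of true sentences by (b); this is precisely what the reduction of the first paragraph requires.

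To construct $\Psi$ I would combine two ingredients. First, a conservation step: to derive $\Pi_{2n}$-theorems one does not need full uniform reflection but only its $\Pi_{2n}$-fragment, so $\mathsf{RFN}^{\mc L}(\mathsf{PA})$ and the iteration of $\Pi_{2n}$-reflection along $\mc L$ have the same $\Pi_{2n}$-consequences; this is folklore, provable in the L\"ob-style manner already used for Lemma \ref{reflection_implies_TI}. Second, using the $\Pi_{2n}$-truth predicate (a $\Pi^0_{2n}$ object) one expresses $\Pi_{2n}$-soundness of this partial iterate as the existence of a truth-respecting labeling of the nodes of $\mc L_e$ closed under the reflection steps. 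When $\mathrm{otype}(\mc L_e)\le\omega^n$ the rank of such a labeling is bounded by $\omega^n$, and it is exactly this bound that collapses the quantifier complexity of ``a correct labeling exists'' down to $\Sigma^0_{2n}$; I would take $\Psi(e)$ to be this $\Sigma^0_{2n}$ assertion. That $\omega^n$ is the borderline at which the collapse succeeds is the $\Sigma^0_{2n}$/$\Pi^0_{2n+1}$ calibration dual to the Ash--Knight computation driving Theorem \ref{upperbound}.

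The main obstacle is precisely this calibration, namely shaving the two quantifiers relative to the naive bound: one must verify that for orders of order type at most $\omega^n$ the $\Pi_{2n}$-soundness of iterated $\Pi_{2n}$-reflection is witnessed $\Sigma^0_{2n}$-uniformly in the index $e$, with \emph{no} appeal to the full $\Pi^0_{2n+1}$ information $\mathrm{otype}(\mc L_e)=\omega^n$. This is the level-$2n$ mirror of the Ash--Knight argument, and it is where the sharp fit between the arithmetical hierarchy and the powers of $\omega$ is genuinely used. The remaining bookkeeping, that $\Psi$ really implies $\Pi_{2n}$-soundness and really follows from $\mathrm{otype}(\mc L)\le\omega^n$, together with the conservation step, is routine but delicate and I expect to be the part that must be written out carefully.
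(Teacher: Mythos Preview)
Your reduction---show that $\mathrm{Prov}_{\le\omega^n}\cap\Pi_{2n}$ sits inside some $\Sigma^0_{2n}$ set of true sentences, then diagonalize---is exactly the shape of the paper's argument. Where you diverge is in how to realize that $\Sigma^0_{2n}$ bound. You propose a $\Sigma^0_{2n}$ over-approximation $\Psi(e)$ of ``order-type $\le\omega^n$'' built from conservation to iterated $\Pi_{2n}$-reflection plus a ``truth-respecting labeling of rank $\le\omega^n$'', but you explicitly leave the central claim---that the $\omega^n$ rank bound collapses the complexity of ``a correct labeling exists'' to $\Sigma^0_{2n}$---as the unfilled step. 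That is not bookkeeping; it is the whole content. Moreover, the conservation between \emph{iterated} full reflection and iterated $\Pi_{2n}$-reflection is not the innocuous folklore you suggest (the paper's own discussion section flags the analogous replacement in the upper bound as open), and it is not clear that any $\Sigma^0_{2n}$ property of the \emph{index} $e$ can simultaneously over-approximate ``order-type $\le\omega^n$'' and guarantee $\Pi_{2n}$-soundness.

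The paper avoids both detours. The key technical input is purely order-theoretic: for computable $\mc L$, $a\in\mc L$, and $\alpha<\omega^n$, the statement ``$\mc L{\upharpoonright}_a$ embeds into $\alpha$'' is uniformly $\Sigma_{2n}$, by a short induction on $n$ that gains two quantifiers per power of $\omega$. A L\"ob-style argument (Lemma~\ref{refNF}) then shows, $\mathsf{PA}$-provably,
\[\mathsf{RFN}\bigl(\mathsf{RFN}^{\alpha}(\mathsf{PA})\bigr)+\bigl(\mc L{\upharpoonright}_a\hookrightarrow_n\alpha\bigr)\ \supseteq\ \mathsf{RFN}\bigl(\mathsf{RFN}^{(\mc L,a)}(\mathsf{PA})\bigr).\]
If $\mathrm{otype}(\mc L)\le\omega^n$ then for every $a\in\mc L$ some such embedding sentence is a \emph{true} $\Sigma_{2n}$ sentence, so every $\mathsf{RFN}^{\mc L}(\mathsf{PA})$ is contained in the single theory $T=\mathsf{RFN}^{\omega^n}(\mathsf{PA})+\{\text{true }\Sigma_{2n}\text{ sentences}\}$, which is sound with a $\Sigma_{2n}$ theorem set. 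Thus there is no index-wise $\Psi$ and no passage through partial reflection: the role of your over-approximation is played by a fixed c.e.\ theory plus the true-$\Sigma_{2n}$ oracle, and the ``calibration'' you point to is precisely the $\Sigma_{2n}$ definability of embeddings into ordinals below $\omega^n$.
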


The key point in the proof of Theorem \ref{lowerbound} is that for any ordinal $\alpha<\omega^n$ there is a $\Sigma_{2n}$ definition of the property of an index of a computable order to be an isomorphic copy of $\alpha$.

We remark that in our main results, we are dealing with reflection iterated along
arbitrary computable well-orders, while classically Feferman and Turing
considered iterations along ordinal notations from Kleene's $\mc O$. In fact the
results are sensitive to this distinction and in \cref{Kleene_O_Section} we establish $\omega^{n+1}+1$ as an upper bound on the order types of elements of $\mc O$ necessary to attain arbitrary true $\Pi_{2n+1}$-sentences as the consequences of iterated full uniform reflection.
\subsection{Acknowledgements} The first author would like to thank Stella Moon
whose questions led him to start investigations on Feferman's theorem. 
He would also like to thank Nikolay Bazhenov for pointing him to the work of Ash and
Knight. We would like to thank Andrey Frolov for sharing with us his neat
alternative proof of the theorem of Ash and Knight. Finally, we would like to thank the anonymous referee, as well as Mateusz Łełyk, Patryk Szlufik, and Patrick Lutz for their useful comments on an earlier version of the paper.

\section{Feferman's Completeness Theorem via Kleene's Normal Forms}\label{Feferman_section}
\subsection{Computable orders}
Usually, recursive progressions proceed along some ordinal given by some ordinal
notation system such as Kleene's $\mathcal O$. We deviate from this since we will
work directly with indices for computable orderings. 

A linear ordering $\mc L$ is a pair consisting of a set $L$ and a binary
relation $\preceq^{\mc L}$ satisfying the usual axioms of linear orders.\footnote{Formally, $\preceq^{\mc L}$ is a set of natural numbers, however we may view it as a binary relation using the standard pairing function $\langle i,j\rangle= (i+j)^2+i$.}. Note that $L$ could be recovered from $\preceq^{\mc L}$ as the set of $x$ such that $x\preceq x$.

Formally within first-order arithmetic, we represent computable orders $\mc L$ as natural numbers coding pairs consisting of an index of a $\Pi_1$-set and an index of a $\Sigma_1$-set such that they represent the same set $\preceq^{\mc L}$ and this set is a binary relation satisfying the axioms of linear orders. There is a first-order arithmetical formula $\mathsf{LO}(\mc L)$ expressing that $\mc L$ is indeed a computable linear order in the sense above.  We say that an order $\mc L$ is a $\mathsf{PA}$-verifiable computable linear order if $\mathsf{PA}$ proves $\mathsf{LO}(\mc L)$. Over $\mathsf{ACA}_0$ we formulate the second-order formula $\mathsf{WO}(\mc L)$ expressing that $\mc L$ is a well-order:
\[ \mathsf{LO}(\mc L)\land  \forall Y \big(\exists x(x\in Y\cap \mc L) \to (\exists x\in Y\cap \mc L)(\forall y\in Y\cap \mc L)
(x\preceq^{\mc L} y)\big)\]

The following theorem gives the first part of our proof of Feferman's
completeness theorem.
\begin{lemma}\label{arith-completeness}
    For any arithmetical sentence $\phi$ there exists a $\mathsf{PA}$-verifiable computable linear order
    $\mc L$ such that $\mathsf{ACA_0}\vdash \phi \lr \mathsf{WO}(\mc L)$.
\end{lemma}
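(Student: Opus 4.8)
The plan is to establish the $\mathsf{ACA}_0$-provable $\Pi^1_1$-completeness of the class of well-orders, using Kleene's normal form theorem to pass from $\phi$ to a computable tree, and then the Kleene--Brouwer ordering to pass from that tree to the desired order $\mc L$. First I would put $\phi$ into prenex normal form $Q_1 x_1 \cdots Q_k x_k\, R(x_1,\dots,x_k)$ with $R$ primitive recursive, and then invoke Kleene's normal form to rewrite $\phi$ as a $\Pi^1_1$ sentence $\forall f\,\exists m\,\theta(\bar f(m))$, where $f$ ranges over functions, $\bar f(m)$ is the code of the length-$m$ initial segment of $f$, and $\theta$ is primitive recursive. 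To this I associate the computable tree
\[ T \;=\; \set{\sigma : \forall m \le \mathrm{lh}(\sigma)\ \neg\theta(\sigma\restrict m)}, \]
consisting of the finite sequences that have not yet forced $\theta$. By construction an infinite branch of $T$ is precisely a function $f$ witnessing $\neg\phi$, so $\phi$ holds if and only if $T$ is well-founded, and I would verify that this equivalence is provable in $\mathsf{ACA}_0$: the passage to normal form is elementary, and the branch extraction needed for the nontrivial direction is available by arithmetical comprehension.

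Next I would set $\mc L = \mathrm{KB}(T)$, the Kleene--Brouwer ordering of $T$, which places proper extensions before their initial segments and compares incomparable nodes by the least coordinate at which they differ. Two standard facts then close the argument. On the one hand, $\mathrm{KB}(T)$ is a computable linear order whose presentation has the required form (a \emph{decidable}, hence simultaneously $\Pi_1$ and $\Sigma_1$, relation), and its linearity is provable already in $\mathsf{PA}$; this yields that $\mc L$ is $\mathsf{PA}$-verifiable. On the other hand, $\mathsf{ACA}_0$ proves $\mathsf{WO}(\mathrm{KB}(T)) \lr T$ is well-founded. Chaining the two equivalences gives $\mathsf{ACA}_0 \vdash \phi \lr \mathsf{WO}(\mc L)$, as required.

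The main obstacle, and the only place where the full strength of $\mathsf{ACA}_0$ (rather than $\mathsf{RCA}_0$) is genuinely needed, is the equivalence between well-foundedness of $T$ and well-orderedness of $\mathrm{KB}(T)$, together with the branch-extraction step. Concretely, in the contrapositive directions one must, from an infinite descending sequence in $\mathrm{KB}(T)$, produce an infinite branch of $T$, and vice versa; the requisite branch --- for instance the leftmost path, or the set of nodes having infinitely many extensions in $T$ --- is defined by an arithmetical formula and exists by arithmetical comprehension. I expect this to be the delicate part of the write-up. By contrast, the reduction of $\phi$ to $\forall f\,\exists m\,\theta$, the computability and linearity of $\mathrm{KB}(T)$, and hence the $\mathsf{PA}$-verifiability of $\mc L$, are routine and can be treated briefly.
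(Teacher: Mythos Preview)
Your proposal is correct and follows essentially the same route as the paper: reduce $\phi$ to a $\Pi^1_1$ normal form $\forall f\,\exists m\,\theta(\bar f(m))$, form the associated primitive recursive tree of ``not-yet-terminated'' sequences, take its Kleene--Brouwer ordering as $\mc L$, and use arithmetical comprehension for the branch-extraction step linking well-foundedness of the tree to well-orderedness of $\mc L$. The only cosmetic difference is that the paper spells out the passage to normal form explicitly via Skolemization of the existential subformulas of $\lnot\phi$, whereas you appeal to Kleene's normal form as a black box; either presentation is fine.
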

This follows immediately from the fact that it is provable in $\mathsf{ACA_0}$ that the set
of indices of computable well-orders is a complete $\Pi^1_1$-set and that every
arithmetical formula is $\Pi^1_1$, see~\cite[Lemma
V.1.8]{simpson2009}.

For the convenience of the reader, we give a proof of Lemma \ref{arith-completeness} using standard techniques.
\begin{proof}
    We claim that there is a formula equivalent to $\lnot \varphi$ over
    $\mathsf{ACA}_0$ that is of the form $(\exists f: \mathbb{N}\to\mathbb{N})\forall x \psi(f,x)$, where $\psi(f,x)$ is a quantifier-free formula whose atomic subformulas are equalities of terms built from $f$ and primitive-recursive formulas.
    
    To prove the claim we consider $\varphi$ to be a formula built using Boolean
    connectives and existential quantifiers. Then we consider the expansion of
    the language of arithmetic by Skolem functions $f_{\chi}(\vec{y})$ for all the subformulas $\chi$ of $\varphi$ of the form
    $\exists x \xi(x,\vec{y})$.  Now, by induction on subformulas
    $\theta(\vec{x})$ of $\varphi$, we define their Skolemized variants
    $\theta'(\vec{x})$ that are quantifier-free formulas in the expanded
    language: the transformation $(\cdot)'$ commutes with Boolean connectives
    and $\chi=(\exists x \xi(x,\vec{y}))'$ is $\xi(f_{\chi}(\vec y),\vec{y})$.
    For each Skolem function $f_\chi$ where $\chi=\exists x \xi(x,\vec{y})$ we have the
    axiom $\xi(x,\vec{y})\to\xi(f_{\chi}(\vec{y}),\vec{y})$ that expresses that
    $f_{\chi}(\vec{y})$ is indeed a Skolem function for $\exists x \,
\xi(x,\vec{y})$. This allows us to transform $\lnot \varphi$ over
$\mathsf{ACA}_0$ to a formula of the form 
\[\exists \vec{f}\forall \vec{x}\ \psi(\vec{f},\vec{x}),\]
where the quantifier $\exists \vec{f}$ quantifies over the candidates for Skolem
functions and the following formula is the universal closure of the conjunction
of the axioms expressing that the $f$'s indeed are designated Skolem functions and
$\lnot \varphi'$. After that, using supplementary primitive-recursive
functions, it is easy to merge multiple multi-variant $f$'s into a single unary $f$ and multiple $\vec{x}$ into a single $x$. Which finishes the proof of the claim.

    Subsequently we transform $\varphi$ from the form 
    \[(\forall f: \mathbb{N}\to\mathbb{N})\exists x\, \lnot \psi(f,x) \quad
        \text{to} \quad
    \forall f\exists x\, \theta(f\upharpoonright x),\]
    where $\theta(y)$ is a quantifier-free formula in the language with primitive-recursive functions that expresses that
    \begin{enumerate}
        \item $y$ is a code of a sequence $(s_0,\ldots,s_{k-1})$ that we treat as a pair consisting of $x=s_0$ and a partial function $f\colon i\mapsto s_{i+1}$,
        \item the domain of $f$ is sufficient to evaluate the validity of the formula $\psi(f,x)$, 
        \item and the formula $\psi(f,x)$ is false.
    \end{enumerate} 

    Hence over $\mathsf{ACA}_0$, $\varphi$ is equivalent to the well-foundedness of the primitive-recursive tree of sequences \[T=\{(s_0,\ldots,s_{k-1})\mid\theta((s_0,\ldots,s_{k-1}))\}.\]
    We construct $\mc L$ as the Kleene-Brouwer order on $T$, i.e., the order
    where each node $(s_0,\ldots,s_{k-1})$ is bigger than all its descendants
    $(s_0,\ldots,s_{k-1},u_k,\ldots,u_{l-1})$ and for all $u_k<u_k'$ all the elements in the subtree of $(s_0,\ldots,s_{k-1},u_k)$ are smaller than the elements in the subtree of $(s_0,\ldots,s_{k-1},u_k')$.

    Every infinite branch through $T$ is an infinite descending chain through
    $\mc L$. Thus, provably in $\mathsf{ACA}_0$, well-orderedness of $\mc L$
    implies the well-foundedness of $T$. In the other direction, for every
    infinite descending chain in $\mc L$ using arithmetical comprehension we find the infinite descending sequence $(),(s_0),(s_0,s_1),\ldots$ in $T$ that is defined to consist of sequences $(s_0,\ldots,s_{k-1})$ whose sub-trees contain infinitely many elements of the original descending sequence. 

    The linearity of this $\mc L$ is, of course, trivial to verify in $\mathsf{ACA}_0$. And since $\mathsf{PA}$ is precisely the set of first-order consequences of $\mathsf{ACA}_0$, the produced order $\mc L$ is $\mathsf{PA}$-verifiable.
\end{proof}

\subsection{Transfinite induction}
The statement that a linear order $\mc L$ is well-ordered is clearly second-order and
thus not expressible in $\mathsf{PA}$. However, as we will see, it is closely
related to the transfinite induction scheme for $\mc L$. 

For $\mc L$ a $\mathsf{PA}$-verifiable linear order, $\mathsf{TI}(\mc
L)$, the \emph{transfinite induction scheme for $\mc L$} 
consists of the formulas
\[(\forall x\in \mc L)((\forall y\prec^{\mc L} x)\varphi(y)\to \varphi(x))\to (\forall x\in \mc L)\; \varphi(x),\]
where $\varphi$ ranges over arbitrary arithmetical formulas that could contain other free variables.
\begin{lemma} 
    \label{TI_cons}
    Suppose $\mc L$ is a $\mathsf{PA}$-verifiable
    computable linear order. Then the theory $\mathsf{ACA}_0+\mathsf{WO}(\mc L)$ is a conservative extension of $\mathsf{PA}+\mathsf{TI}(\mc L)$.
\end{lemma}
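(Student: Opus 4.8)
The plan is to establish the two halves of the claim separately: first that $\mathsf{ACA}_0+\mathsf{WO}(\mc L)$ proves every theorem of $\mathsf{PA}+\mathsf{TI}(\mc L)$, so that it is genuinely an extension, and then that conversely every first-order sentence which $\mathsf{ACA}_0+\mathsf{WO}(\mc L)$ proves is already a theorem of $\mathsf{PA}+\mathsf{TI}(\mc L)$, which is the conservativity. The first half is a direct formal derivation inside $\mathsf{ACA}_0$; the second I would obtain model-theoretically, by expanding an arbitrary model of the first-order theory to a two-sorted model of the second-order theory without changing its first-order part.

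For the extension direction, recall that $\mathsf{ACA}_0$ proves the full first-order induction scheme and hence all of $\mathsf{PA}$, and that $\mathsf{PA}\vdash\mathsf{LO}(\mc L)$ since $\mc L$ is $\mathsf{PA}$-verifiable. It remains to derive $\mathsf{TI}(\mc L)$. Given an arithmetical $\varphi(x)$ (possibly with parameters) that is $\prec^{\mc L}$-progressive, apply arithmetical comprehension to form the set $Y=\{x : \lnot\varphi(x)\}$. If $(\forall x\in\mc L)\varphi(x)$ failed, then $Y\cap\mc L$ would be nonempty, so by $\mathsf{WO}(\mc L)$ it would have a $\preceq^{\mc L}$-least element $x_0$; but then every $y\prec^{\mc L}x_0$ satisfies $\varphi(y)$, so progressiveness gives $\varphi(x_0)$, contradicting $x_0\in Y$. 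Hence $\mathsf{TI}(\mc L)$ holds.

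For conservativity, let $M\models\mathsf{PA}+\mathsf{TI}(\mc L)$; note $M\models\mathsf{LO}(\mc L)$ as well. Form the two-sorted structure $\mathfrak{M}=(M,\mrm{Def}(M))$ whose second-order universe is the family of parametrically first-order definable subsets of $M$. It is standard that $\mathfrak{M}\models\mathsf{ACA}_0$: arithmetical comprehension holds because an arithmetical formula with set parameters from $\mrm{Def}(M)$ becomes, after substituting their defining formulas, again first-order definable, so its extension lies in $\mrm{Def}(M)$; and the set-induction axiom follows from the full first-order induction available in $M\models\mathsf{PA}$. The remaining point is $\mathfrak{M}\models\mathsf{WO}(\mc L)$, which unwinds to the statement that every nonempty $Y\in\mrm{Def}(M)$ meeting $\mc L$ has a $\preceq^{\mc L}$-least element, i.e., the least-element principle for $\mc L$ applied to all arithmetical predicates. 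Over $\mathsf{PA}+\mathsf{LO}(\mc L)$ this least-element principle is equivalent to $\mathsf{TI}(\mc L)$ (taking $\varphi:=\lnot\psi$, the progressiveness of $\lnot\psi$ is exactly the contrapositive of ``$\psi$ has no $\preceq^{\mc L}$-least element''), so it holds in $M$. Thus $\mathfrak{M}\models\mathsf{ACA}_0+\mathsf{WO}(\mc L)$ while its first-order reduct is precisely $M$.

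Consequently, any first-order sentence $\sigma$ with $\mathsf{ACA}_0+\mathsf{WO}(\mc L)\vdash\sigma$ holds in $\mathfrak{M}$, hence in $M$; as $M$ was an arbitrary model of $\mathsf{PA}+\mathsf{TI}(\mc L)$, the completeness theorem yields $\mathsf{PA}+\mathsf{TI}(\mc L)\vdash\sigma$. The only genuinely delicate step is the verification that $\mrm{Def}(M)\models\mathsf{ACA}_0$ together with the matching of the single set-quantifier $\forall Y$ in $\mathsf{WO}(\mc L)$ against the parametrized arithmetical instances of $\mathsf{TI}(\mc L)$; both become routine once one observes that definable-set parameters can always be eliminated in favor of number parameters inside arithmetical formulas.
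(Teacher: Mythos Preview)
Your proof is correct and follows essentially the same strategy as the paper: the extension direction via comprehension on the complement of $\varphi$, and conservativity by expanding an arbitrary model of $\mathsf{PA}+\mathsf{TI}(\mc L)$ with its parametrically definable subsets to obtain a model of $\mathsf{ACA}_0+\mathsf{WO}(\mc L)$. The paper treats the extension direction as obvious and spells out the definable-sets expansion in the same way you do.
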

\begin{proof} 
Obviously, $\mathsf{ACA}_0+\mathsf{WO}(\mc L)\supseteq
\mathsf{PA}+\mathsf{TI}(\mc L)$. Now let us show that every first-order
consequence of $\mathsf{ACA}_0+\mathsf{WO}(\mc L)$ is provable in
$\mathsf{PA}+\mathsf{TI}(\mc L)$. In order to do this we will show that any
model $\mathfrak{A}$ of $\mathsf{PA}+\mathsf{TI}(\mc L)$ can be extended to a
model of $\mathsf{ACA}_0+\mathsf{WO}(\mc L)$. Indeed, we extend $\mathfrak{A}$
to a model of the language of second-order arithmetic by all the subsets
$A\subseteq \mathfrak{A}$ that are definable in $\mathfrak{A}$ by an
arithmetical formula with parameters from $\mathfrak{A}$.

Let $X$ be in the
second-order part of $\mathfrak{A}$ such that $\phi(x)$ defines $X$. Then, since 
$\mathfrak{A}$ satisfies induction for $\phi(x)$, it also satisfies induction
for $X$ and since $X$ was arbitrary, it satisfies set induction. If
$\phi$ is an arithmetical formula with parameters $Y_1,\dots Y_n$ from
$\mathfrak{A}$, then let $\theta_i$ be the arithmetical first-order formulas
defining $Y_i$ and $\psi$ be the formula with every subformula of the
form $t\in
Y_i$ replaced by $\theta_i(t)$ for any term $t$. Then
\[ \mathfrak A\models \forall x (\psi(x) \lr \phi(x))\] 
and thus $\mathfrak A$ satisfies arithmetical comprehension for $\phi$.

It remains to show that $\mathfrak{A}$ satisfies $\mathsf{WO}(\mc L)$. As $\mathfrak{A}$ satisfies transfinite induction over $\mc L$ for arbitrary $\phi(x)$ with side parameters, it satisfies the second-order principles
\[\forall X\big ((\forall x\in \mc L)((\forall y\prec^{\mc L} x)y\in X\to x\in X)\to (\forall x\in \mc L)\; x\in X\big).\]
Which is clearly equivalent in $\mathsf{ACA}_0$ to the well-foundedness of $\mc L$:
\[\forall X \big(\exists x (x\in X\cap \mc L) \to (\exists x\in X\cap \mc L)(\forall y\in X\cap \mc L)
(x\preceq^{\mc L} y)\big).\]
Given that we assume $\mc L$ to be $\mathsf{PA}$-verifiable computable linear order, we conclude that we have $\mathfrak{A}\models \mathsf{ACA}_0+\mathsf{WO}(\mc L)$.
\end{proof}


\subsection{Uniform reflection and its iterations}\label{uniform_reflection_section}
For an arithmetical c.e.\ axiomatizable theory $T$ represented by a
$\Sigma_1$-formula $\mathsf{Ax}_T(x)$ expressing that $x$ is a G\"odel number of an axiom of $T$, we have the natural arithmetized provability predicate $\mathsf{Prv}_T(x)$. To be precise, the formula $\mathsf{Prv}_T(x)$ expresses that $x$ is a G\"odel number of an arithmetical sentence and there exists a proof in first-order logic of $x$ from some premises $y$ all of which satisfy $\mathsf{Ax}_T(y)$.

Full uniform reflection $\mathsf{RFN}(T)$ for a c.e.\ axiomatizable theory $T$ as
above  is the scheme
\begin{equation}\label{uniform_reflection}\forall x \big(\mathsf{Prv}_T(\gnmb{\varphi(\dot x)})\to \varphi(x)\big)\text{, where $\varphi(x)$ is any arithmetical formula.}\end{equation}
To simplify our notations we will consider $\mathsf{RFN}(T)$ to be the extension of $T$ by the schemata above. 

For a $\mathsf{PA}$-verifiable computable linear order $\mathcal{L}$ we define
the uniformly c.e.\ axiomatizable family of theories $(\mathsf{RFN}^{(\mc
L,a)}(T))_{a\in \mc L}$ as
\[\mathsf{RFN}^{(\mc L, a)}(T)=T+\bigcup\limits_{b\prec^{\mc L} a}\mathsf{RFN}(\mathsf{RFN}^{(\mc L, b)}(T))\]
Notice that $\mathsf{RFN}(\mathsf{RFN}^{(\mc L, b)}(T))$ as a set of theorems might depend on the choice of $\Sigma_1$-formula recognizing axioms of $\mathsf{RFN}^{(\mc L, b)}(T)$. Thus, in order to make the definition above formally correct, we, in fact, need to produce a family of formulas defining axioms of theories $\mathsf{RFN}^{(\mc L, a)}(T)$. 

We are going to define a $\Sigma_1$-formula $\mathsf{Ax}_T^{\mc L}(x,y)$ with
the intention that when we substitute a numeral $\underline{a}$ of $a$ instead
of $x$, then the resulting $\Sigma_1$-formula $\mathsf{Ax}_T^{\mc L}(\underline{a},y)$ should be the formula recognizing the axioms of $\mathsf{RFN}^{(\mc L, a)}(T)$. We construct $\mathsf{Ax}_T^{\mc L}(x,y)$ by G\"odel's Diagonal Lemma so that $\mathsf{Ax}_T^{\mc L}(x,y)$ is equivalent to the conjunction of the following conditions:
\begin{enumerate}
    \item $x\in\mc L$, 
    \item $y$ is an axiom of $T$ or for some $z\prec^{\mc L} x$, $y$ is an
        instance of the uniform reflection scheme $\mathsf{RFN}(\mathsf{RFN}^{(\mc L,z)}(T))$.
\end{enumerate}
Notice that the instance of $\mathsf{RFN}(\mathsf{RFN}^{(\mc L,z)}(T))$ formulated
as in \cref{uniform_reflection} employs $\mathsf{Prv}_{\mathsf{RFN}^{(\mc
L,z)}(T)}(x)$ as a subformula, which in turn is fully determined by the formula
$\mathsf{Ax}_{\mathsf{RFN}^{(\mc L,z)}(T)}(x)$. For the purpose of the fixed
point definition of $\mathsf{Ax}_T^{\mc L}(x,y)$, this instance of
$\mathsf{Ax}_{\mathsf{RFN}^{(\mc L,z)}(T)}(x)$ is 
$\mathsf{Ax}_{T}^{\mc L}(\underline z,x)$. That is, above we define
$\mathsf{Ax}_T^{\mc L}(x,y)$ in terms of its own G\"odel number. Since the
conjunction of the items (1) and (2) is clearly $\Sigma_1$, the standard proof
of the Diagonal Lemma will produce in this case a $\Sigma_1$-formula $\mathsf{Ax}_T^{\mc
L}(x,y)$.

Finally, we put 
\[\mathsf{RFN}^{\mc L}(T)=T+\bigcup\limits_{a\in \mc L} \mathsf{RFN}(\mathsf{RFN}^{(\mc L,a)}(T)).\]

\begin{lemma}\label{reflection_implies_TI}For any $\mathsf{PA}$-verifiable
computable order $\mc L$ and $a\in L$, the theory $\mathsf{RFN}^{(\mc L,a)}(\mathsf{PA})$ proves all instances of $\mathsf{TI}(\mc L\upharpoonright b)$, where $b\prec^{\mc L}a$.\end{lemma}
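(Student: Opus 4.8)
The plan is to prove this by a single application of Löb's theorem in the guise of \emph{reflexive transfinite induction}, which is the standard route for deriving transfinite induction from iterated reflection and which sidesteps the fact that $\mc L$ need not be well-founded. Fix an arithmetical formula $\varphi(x)$ (possibly with side parameters) and $b \prec^{\mc L} a$. Writing $U_c := \mathsf{RFN}^{(\mc L,c)}(\mathsf{PA})$ and abbreviating the progressiveness hypothesis of $\mathsf{TI}(\mc L\upharpoonright b)$ as $\mathsf{Prg} :\equiv (\forall u \prec^{\mc L} b)((\forall v \prec^{\mc L} u)\varphi(v)\to\varphi(u))$, it suffices to show
\[U_a \vdash \mathsf{Prg} \to (\forall x \prec^{\mc L} b)\varphi(x).\]
The engine of the argument is the auxiliary formula $\mathsf{G}(c) :\equiv \mathsf{Prg}\to(\forall z \prec^{\mc L} c)\varphi(z)$, and the goal reduces to the single provability claim
\[(\ast)\qquad \mathsf{PA}\vdash(\forall c \preceq^{\mc L} b)\,\mathsf{Prv}_{U_c}\big(\gnmb{\mathsf{G}(\dot c)}\big).\]
Indeed, once $(\ast)$ is available, I would finish inside $U_a$: instantiate $(\ast)$ at $c=b$ to get the $\mathsf{PA}$-theorem $\mathsf{Prv}_{U_b}(\gnmb{\mathsf{G}(b)})$, then apply the reflection scheme $\mathsf{RFN}(U_b)$ — which is among the axioms of $U_a$ precisely because $b \prec^{\mc L} a$ — to obtain $\mathsf{G}(b)$, and conclude $(\forall z \prec^{\mc L} b)\varphi(z)$ under the assumption $\mathsf{Prg}$.

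First I would establish $(\ast)$ by reflexive induction. Applying Löb's theorem to the sentence $Q :\equiv (\forall c \preceq^{\mc L} b)\,\mathsf{Prv}_{U_c}(\gnmb{\mathsf{G}(\dot c)})$, it is enough to prove $Q$ in $\mathsf{PA}$ under the extra hypothesis $\mathsf{Prv}_{\mathsf{PA}}(\gnmb Q)$. Using provable $\Sigma_1$-completeness together with the transitivity of $\prec^{\mc L}$ (available since $\mc L$ is $\mathsf{PA}$-verifiable), the hypothesis $\mathsf{Prv}_{\mathsf{PA}}(\gnmb Q)$ yields, for each fixed $c\preceq^{\mc L} b$, that $\mathsf{PA}$ proves $(\forall z \prec^{\mc L} c)\,\mathsf{Prv}_{U_z}(\gnmb{\mathsf{G}(\dot z)})$, so the whole matter comes down to the box-level implication
\[(\sharp)\qquad \mathsf{PA}\vdash(\forall c \preceq^{\mc L} b)\Big[\mathsf{Prv}_{\mathsf{PA}}\big(\gnmb{(\forall z \prec^{\mc L} \dot c)\,\mathsf{Prv}_{U_z}(\goedel{\mathsf{G}(\dot z)})}\big)\to \mathsf{Prv}_{U_c}(\gnmb{\mathsf{G}(\dot c)})\Big].\]

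To verify $(\sharp)$ I would, reasoning in $\mathsf{PA}$ and fixing $c\preceq^{\mc L} b$, exhibit a formalized $U_c$-derivation of $\mathsf{G}(c)$. Inside $U_c$, assume $\mathsf{Prg}$ and fix $z \prec^{\mc L} c$; since $\mathsf{PA}\subseteq U_c$, the antecedent of $(\sharp)$ transfers to $U_c$ and gives $\mathsf{Prv}_{U_z}(\gnmb{\mathsf{G}(z)})$. Because $z \prec^{\mc L} c$, the reflection scheme $\mathsf{RFN}(U_z)$ belongs to the axioms of $U_c$ — this is exactly the content of the fixed-point definition of $\mathsf{Ax}_{\mathsf{PA}}^{\mc L}$ — so its instance $\mathsf{Prv}_{U_z}(\gnmb{\mathsf{G}(\dot z)})\to\mathsf{G}(z)$ yields $\mathsf{G}(z)$, whence $(\forall w \prec^{\mc L} z)\varphi(w)$ by $\mathsf{Prg}$, and finally $\varphi(z)$ by one more use of progressiveness at $z$. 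As $z \prec^{\mc L} c$ was arbitrary this proves $(\forall z \prec^{\mc L} c)\varphi(z)$, i.e.\ $\mathsf{G}(c)$; note that every point invoked stays $\prec^{\mc L} b$, so the restricted hypothesis $\mathsf{Prg}$ is never used outside its intended range.

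I expect the \textbf{main obstacle} to be bookkeeping at the level of the self-referential axiomatization rather than any conceptual difficulty: one must check, provably in $\mathsf{PA}$, that the formula $\mathsf{Ax}_{\mathsf{PA}}^{\mc L}(\underline c,\cdot)$ produced by the Diagonal Lemma genuinely recognizes $\mathsf{RFN}(U_z)$ for every $z \prec^{\mc L} c$, so that the reflection instances used in $(\sharp)$ really are axioms of $U_c$ and the predicates $\mathsf{Prv}_{U_c}$ and $\mathsf{Prv}_{U_z}$ compose as expected. The other point requiring care is that reflexive induction remains legitimate even though $\mc L$ may be ill-founded: no induction \emph{along} $\mc L$ is ever performed, the single Löb step does all the work, and the argument relies only on the $\mathsf{PA}$-provable linearity of $\prec^{\mc L}$.
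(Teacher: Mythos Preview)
Your proof is correct and takes essentially the same approach as the paper: a single application of L\"ob's theorem (reflexive induction) together with the fact that $\mathsf{RFN}(U_z)$ is an axiom of $U_c$ whenever $z\prec^{\mc L}c$. The only cosmetic difference is the scope of the L\"ob sentence --- the paper L\"obs on the full statement of the lemma (quantifying over $\mc L$, $a$, $b$, and the TI-instance) and then reflects once from $U_b$ into $U_a$, whereas you fix $\mc L,a,b,\varphi$ externally and L\"ob on the auxiliary $Q=(\forall c\preceq^{\mc L}b)\,\mathsf{Prv}_{U_c}(\gnmb{\mathsf{G}(\dot c)})$, threading through all the $U_c$; both routes rest on the same idea.
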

\begin{proof}
    Recall that L\"ob's Theorem \cite{lob1955solution} (in the case of $\mathsf{PA}$) states that if for some arithmetical sentence $\theta$ we have $\mathsf{PA}\vdash \mathsf{Prv}_{\mathsf{PA}}(\gnmb{\theta})\to\theta$, then $\mathsf{PA}\vdash \theta$. Note that L\"ob's theorem could, in fact, be considered just as a reformulation of the G\"odel's Second Incompleteness Theorem for the theory $\mathsf{PA}+\lnot \theta$.

    We will pick as the sentence $\theta$ the natural formalization of the
    statement of Lemma \ref{reflection_implies_TI} in the language of
    arithmetic. In the rest of the proof we reason in $\mathsf{PA}$ (i.e. by
    finitistic means) to prove that $\mathsf{Prv}_\mathsf{PA}(\theta)$ implies
    $\theta$, which by L\"ob's theorem will imply that $\theta$ is provable in
    $\mathsf{PA}$ and hence by soundness of $\mathsf{PA}$ we will conclude that
    $\theta$ is true which is precisely what we needed to show.
    
    We suppose that we have a proof $p$ of $\theta$, a $\mathsf{PA}$-verifiable computable order $\mc L$ and an element $a\in\mc L$. Our goal is produce a proof in $\mathsf{RFN}^{(\mc L,a)}(\mathsf{PA})$ of an instance 
     \begin{equation}\label{TI_inst}(\forall x\prec^{\mc L} b)((\forall y\prec^{\mc L} x)\varphi(y)\to \varphi(x))\to (\forall x\prec^{\mc L} b)\; \varphi(x),\end{equation}
     of  transfinite induction $\mathsf{TI}(\mc L\upharpoonright b)$.

     Note that the fact that $\mc L$ is a $\mathsf{PA}$-verifiable linear order implies that $\mathsf{PA}$ proves that $\mc L$ is a $\mathsf{PA}$-verifiable linear order. Therefore, since $\theta$ is provable in $\mathsf{PA}$, it is  provable in $\mathsf{PA}$ that,
     \[ \forall c\prec^{\mc L} b\,\mathsf{Prv}_{\mathsf{RFN}^{(\mc L,b)}(\mathsf{PA})}\big(\gnmb{(\forall x\prec^{\mc L} \dot c)((\forall y\prec^{\mc L} x)\varphi(y)\to \varphi(x))\to (\forall x\prec^{\mc L} \dot c)\; \varphi(x),}\big).\] 
     Henceforth, since $\mathsf{RFN}^{(\mc L,a)}(\mathsf{PA})$ contains $\mathsf{RFN}(\mathsf{RFN}^{(\mc L,b)}(\mathsf{PA}))$ we conclude that $\mathsf{RFN}^{(\mc L,a)}(\mathsf{PA})$ proves
     \begin{equation}\label{pre_TI_inst}\forall c\prec^{\mc L} b\big((\forall x\prec^{\mc L} c)((\forall y\prec^{\mc L} x)\varphi(y)\to \varphi(x))\to (\forall x\prec^{\mc L} c)\; \varphi(x)\big).\end{equation}
     Obviously (\ref{pre_TI_inst}) implies (\ref{TI_inst}) even over
     $\mathsf{PA}$, which concludes the proof of our claim and hence the lemma.
\end{proof}

\begin{theorem}[Feferman's Completeness Theorem] For every true arithmetical sentence $\varphi$ there exists a well-founded $\mathsf{PA}$-verifiable linear order $\mc L$ such that
  \[\mathsf{RFN}^{\mc L}(\mathsf{PA})\vdash \varphi.\]
\end{theorem}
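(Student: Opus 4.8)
The plan is to combine the three lemmas already established into a single chain, introducing only a harmless reindexing of the order. First I would feed the given true arithmetical sentence $\varphi$ into Lemma~\ref{arith-completeness} to obtain a $\mathsf{PA}$-verifiable computable linear order $\mc M$ with $\mathsf{ACA}_0\vdash\varphi\lr\mathsf{WO}(\mc M)$. Since $\varphi$ holds in the standard model, so does $\mathsf{WO}(\mc M)$, and hence $\mc M$ is genuinely well-founded. From $\mathsf{ACA}_0\vdash\varphi\lr\mathsf{WO}(\mc M)$ we get $\mathsf{ACA}_0+\mathsf{WO}(\mc M)\vdash\varphi$, and as $\varphi$ is first-order, the conservativity supplied by Lemma~\ref{TI_cons} yields $\mathsf{PA}+\mathsf{TI}(\mc M)\vdash\varphi$.

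It therefore suffices to produce a well-founded $\mathsf{PA}$-verifiable order $\mc L$ with $\mathsf{RFN}^{\mc L}(\mathsf{PA})\vdash\mathsf{TI}(\mc M)$: since $\mathsf{RFN}^{\mc L}(\mathsf{PA})\supseteq\mathsf{PA}$, the previous step then gives $\mathsf{RFN}^{\mc L}(\mathsf{PA})\vdash\varphi$. I would take $\mc L=\mc M+1$, obtained by appending a top element to $\mc M$; this is still well-founded and $\mathsf{PA}$-verifiable, the latter because $\mathsf{PA}\vdash\mathsf{LO}(\mc M)$ trivially entails $\mathsf{PA}\vdash\mathsf{LO}(\mc M+1)$. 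To prove $\mathsf{RFN}^{\mc M+1}(\mathsf{PA})\vdash\mathsf{TI}(\mc M)$ I would first unwind the definition of the iterated reflection theories to record the bookkeeping identity $\mathsf{RFN}^{\mc M+1}(\mathsf{PA})=\mathsf{RFN}^{(\mc M+2,\top)}(\mathsf{PA})$, where $\top$ is the top element of $\mc M+2$: both theories are $\mathsf{PA}$ augmented by $\mathsf{RFN}(\mathsf{RFN}^{(\mc M+1,a)}(\mathsf{PA}))$ as $a$ ranges over the elements of $\mc M+1$, using that $\mathsf{RFN}^{(\mc M+2,a)}(\mathsf{PA})=\mathsf{RFN}^{(\mc M+1,a)}(\mathsf{PA})$ for $a\in\mc M+1$. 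Writing $m$ for the top element of $\mc M+1$, so that $m\prec^{\mc M+2}\top$ and the elements strictly below $m$ are exactly those of $\mc M$, Lemma~\ref{reflection_implies_TI} applied to the order $\mc M+2$ with $a=\top$ and $b=m$ gives that $\mathsf{RFN}^{(\mc M+2,\top)}(\mathsf{PA})$ proves every instance of $\mathsf{TI}((\mc M+2)\upharpoonright m)$; and this scheme is $\mathsf{TI}(\mc M)$, since the two schemes agree instance by instance once one notes that $x\prec^{\mc M+2}m$ is equivalent to $x\in\mc M$ and that $\prec^{\mc M+2}$ restricts to $\prec^{\mc M}$ on $\mc M$. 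This closes the chain.

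The only real subtlety — and the reason for the appended points — is an off-by-one phenomenon around the need for a \emph{uniform, internal} version of the induction scheme. Lemma~\ref{reflection_implies_TI} delivers the initial-segment scheme $\mathsf{TI}(\mc L\upharpoonright b)$ only for $b$ strictly below the reflection point $a$; to obtain the full $\mathsf{TI}(\mc M)$, whose outer quantifier must range over \emph{all} elements of $\mc M$ (including the nonstandard ones present in models of $\mathsf{PA}$), one needs a single $b$ lying above all of $\mc M$ yet still strictly below the point $a$ at which reflection is invoked. A bare top element of $\mc M$ would play the role of $b$ but would leave no room for $a$ above it, which is why two extra points, i.e. the passage to $\mc M+2$ (equivalently the $\mc M+1$ of the statement), are required. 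Crucially, Lemma~\ref{reflection_implies_TI} already internalizes the universal quantifier over the segment — its proof establishes the $\forall c\prec^{\mc L}b$ form via a single application of uniform reflection — so the per-element instances never have to be stitched together by hand, and no genuinely new argument is needed beyond arranging the order correctly. All remaining verifications (well-foundedness and $\mathsf{PA}$-verifiability of $\mc M+1$, and the definitional identity above) are routine.
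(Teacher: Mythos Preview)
Your plan is the paper's plan: chain Lemma~\ref{arith-completeness}, Lemma~\ref{TI_cons}, and Lemma~\ref{reflection_implies_TI}, and deal with an off-by-one at the end. Your diagnosis of why an extra point is needed above $\mc M$ is exactly right. The problem is the device you use to handle it. The ``bookkeeping identity'' $\mathsf{RFN}^{(\mc M+2,a)}(\mathsf{PA})=\mathsf{RFN}^{(\mc M+1,a)}(\mathsf{PA})$ is \emph{not} an identity, and is not routine to verify even as an equality of theorem sets. The axioms of $\mathsf{RFN}^{(\mc M+2,a)}(\mathsf{PA})$ are reflection instances whose provability predicate is built from the formula $\mathsf{Ax}^{\mc M+2}_{\mathsf{PA}}(\underline b,\cdot)$; that formula arises from the Diagonal Lemma applied to a template mentioning the code of $\mc M+2$, so it is a different fixed point from $\mathsf{Ax}^{\mc M+1}_{\mathsf{PA}}(\underline b,\cdot)$, and the reflection sentences are literally different. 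To pass from ``different axioms'' to ``same theorems'' you would need, at level $a$, that $\mathsf{PA}$ proves $\forall b\prec a\,\big(\mathsf{Prv}_{\mathsf{RFN}^{(\mc M+1,b)}(\mathsf{PA})}\leftrightarrow\mathsf{Prv}_{\mathsf{RFN}^{(\mc M+2,b)}(\mathsf{PA})}\big)$; a L\"ob/reflexive-induction attempt leaves you with only $\mathsf{Prv}_{\mathsf{PA}}$ of that equivalence, and an external transfinite induction along $\mc M+1$ collapses at limit stages because you only get a separate $\mathsf{PA}$-proof for each $b$, not a single one uniform in $b$. This is precisely the intensional sensitivity the paper flags.

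The paper sidesteps this by never comparing two hierarchies. With $\mc L=\mc L_1+1$ and $a$ its top, the definition already gives $\mathsf{RFN}^{\mc L}(\mathsf{PA})\supseteq\mathsf{RFN}(\mathsf{RFN}^{(\mc L,a)}(\mathsf{PA}))$, and the proof of Lemma~\ref{reflection_implies_TI} actually shows its own statement is $\mathsf{PA}$-provable; one application of uniform reflection over $\mathsf{RFN}^{(\mc L,a)}(\mathsf{PA})$ then turns the internal $\forall b\prec^{\mc L}a\,\mathsf{Prv}(\ldots)$ into $\forall b\prec^{\mc L}a\,\mathsf{TI}(\mc L\upharpoonright b)$, which is $\mathsf{TI}(\mc L_1)$. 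If you prefer to cite Lemma~\ref{reflection_implies_TI} verbatim, simply take your final order to be $\mc L=\mc M+2$ rather than $\mc M+1$: then $\mathsf{RFN}^{\mc L}(\mathsf{PA})\supseteq\mathsf{RFN}^{(\mc L,\top)}(\mathsf{PA})\vdash\mathsf{TI}(\mc L\upharpoonright m)=\mathsf{TI}(\mc M)$ with no cross-order comparison at all.
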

\begin{proof}
    By Lemma \ref{arith-completeness} we could pick a $\mathsf{PA}$-verifiable computable well-order $\mc L_1$ such that \[\mathsf{ACA}_0\vdash \varphi\mathrel{\leftrightarrow} \mathsf{WO}(\mc L_1).\] By Lemma \ref{TI_cons},
    \[\mathsf{PA}+\mathsf{TI}(\mc L_1)\vdash \varphi.\]
    And for the order $\mc L$ that is obtained from $\mc L_1$ by adding one element $a$ on the very top, by Lemma \ref{reflection_implies_TI} we get
    \[\mathsf{RFN}(\mathsf{RFN}^{(\mc L,a)}(\mathsf{PA}))\vdash \varphi\]
    and hence
    \[\mathsf{RFN}^{\mc L}(\mathsf{PA})\vdash \varphi\]
\end{proof}

\section{Feferman's Completeness Theorem via Deduction Chains}\label{sec:history}

One might wonder whether Feferman could have used a different and technically
more transparent approach other than his intricate applications of the recursion
theorem. Indeed, the idea we are putting forward here is that two results available at the time could have led to such a proof. The first result, perhaps unsurprisingly, is L\"ob's theorem from 1955 \cite{lob1955solution} while the second is Sch\"utte's 1956 completeness theorem \cite[Satz 6]{schuette1956}
for $\omega$-logic, which showed that a canonical primitive recursive proof tree (indeed a Kalm\'ar-elementary tree) can be associated with every true arithmetic statement, entailing that $\omega$-logic with the primitive recursive $\omega$-rule (indeed Kalm\'ar-elementary $\omega$-rule) is already complete.\footnote{The completeness with the recursive  $\omega$-rule is usually credited to Shoenfield 
whose article \cite{shoenfield1959} appeared in 1959, while the canonical proof tree in many papers is associated with Mints' work (see \cite{mints1976}, \cite{sundholm1983}, \cite{franzen2004}). }   L\"ob's theorem, of course, was well known at the time, but even today it remains
largely unknown that Sch\"utte was the first mathematician who showed the primitive recursive completeness of $\omega$-logic\footnote{This was confirmed by G\"oran Sundholm, an expert on the $\omega$-rule, in conversations with the second author. It is unclear why this is the case. Even though Sch\"utte's paper was in German that should not have been an obstacle back then.} and introduced canonical search trees.

 \begin{definition} (The Tait calculus for $\omega$-logic)
 For classical logic, the so-called Tait calculus is very convenient (see
 \cite{tait1968} and \cite{schwichtenberg1977}). In it, one deduces finite
 sequences of formulas, called sequents. Sequents are referred to by capital
 Greek letters $\Delta,\Gamma,\Lambda,\Xi,\ldots$. The result of appending a
 formula $\theta$ to a sequent $\Gamma$ is written $\Gamma,\theta$. Formulas are
 assumed to be in negation normal form, i.e., negations appear only in front of
 atomic formulas, whereas negation of more complex formulas is a defined
 operation, using De Morgan's laws and dropping double negations. The other logical particles are $\wedge,\vee,\exists,\forall$. Literals are either atomic formulas or negated atomic formulas. Working in $\omega$-logic means that we can assume that formulas have no free variables. Below we are only concerned with $\omega$-logic based on the language of $\PA$. As a result, all literals are either true or false, and this can be checked by a primitive recursive algorithm. 
 The axioms of the infinitary calculus are those sequents that contain a true literal. The rules are the usual ones for deducing a conjunction, disjunction, or existential formula, plus the $\omega$-rule (for details see \cite{schwichtenberg1977}).
 \end{definition}
 \begin{definition}
 For any non-literal $\C$, a {\bf $\C$-deduction chain} is a finite string \[\Gamma_0,\Gamma_1,\ldots,\Gamma_k\]
of sequents $\Gamma_i$ constructed according to the following rules:
\begin{itemize}
\item[(i)] $\Gamma_0\; = \; \C$.
\item[(ii)] $\Gamma_i$ is not axiomatic for $i<k$.
\item[(iii)] If $i<k$, then $\Gamma_i$ is of the form
    \[\Gamma_i',\E,\Gamma_i''\]
    where $\E$ is not a literal and $\Gamma_i'$ contains only literals.
     $\E$ is said to be the {\em redex} of $\Gamma_i$.
\end{itemize}
Let $i<k$ and $\Gamma_i$ be reducible. $\Gamma_{i+1}$ is obtained from
 $\Gamma_i=\Gamma_i',\E,\Gamma_i''$
     as follows:
     \begin{enumerate}
    \item If $\E\equiv \E_0\vee \E_1$ then
      \[\Gamma_{i+1}\;=\; \Gamma_i',\E_0,\E_1,\Gamma_i''.\]
      \item  If $\E\equiv \E_0\wedge \E_1 $ then
      \[\Gamma_{i+1}\;=\; \Gamma_i',\E_j,\Gamma_i''\]
      where $j=0$ or $j=1$.
      \item  If $\E\equiv \exists x\,\F(x)$ then
      \[\Gamma_{i+1}\;=\; \Gamma_i',\F(\underline{m}),\Gamma_i'',\E\]
      where $m$ is the first number such that $\F(\underline{m})$ does not
occur in $\Gamma_0,\ldots,\Gamma_i$.
 \item If $\E\equiv \forall x\,\F(x)$ then
      \[\Gamma_{i+1}\;=\; \Gamma_i',\F(\underline{m}),\Gamma_i''\]
      for some $m$.
   \end{enumerate}

 The set of $\C$-deduction chains forms a tree, the {\em Stammbaum} ${\mathbb B}_{\C}$, labeled with
strings of sequents.\footnote{Later this tree was also called \emph{canonical tree}}
\end{definition}

 We will now consider two possible outcomes.
\begin{itemize}

\item {\em Case I:} ${\mathbb B}_{\C}$ is well-founded.
 \item[] Then ${\mathbb B}_{\C}$ yields an elementary recursive $\omega$-proof of $\C$. 
 So $\C$ is true. 
 
 \item {\em Case II:} ${\mathbb B}_{\C}$ is not well-founded.
 \item[] Then one shows that $\C$ is false by verifying that an infinite path through this tree contains only
 false formulas, proceeding by induction on the length of such formulas.
 
\end{itemize}

 Kleene's normal form theorem for arithmetic formulas (\cref{arith-completeness}) also assigns a linear computable ordering $\prec_\varphi$ to every sentence $\varphi$ such that the truth of $\varphi$ is equivalent to 
 $\prec_\varphi$ being a wellordering. This result is obtained by a series of syntactic transformations performed on $\varphi$, and it seems that it doesn't yield any new insights as to the truth of $\varphi$. Sch\"utte's completeness theorem, however, associates with $\varphi$ an ordering coming from
 a search tree for $\varphi$ in $\omega$-logic and it seems that this result is the more important one.
 
 \begin{definition} (Reflexive Induction)
 There is an immediate consequence of L\"ob's theorem, christened {\em reflexive induction}. It was singled out by Girard and Schmerl (see \cite{schmerl1979fine}, p. 337). Here we will use a slight variant 
 due to Beklemishev \cite{beklemishev2015}.

 A formula $\F(u)$ is said to be {\em $\prec$-reflexively progressive with respect to $T$} if \[T\vdash \forall x[\Prm{T}(\goed{\forall y\prec \dot{x}\,\F(y)})\to \F(x)].\]
\end{definition}

\begin{lemma} If $\F(u)$ is $\prec$-reflexively progressive with respect to $T$, then 
\[T\vdash \forall x\,\F(x).\]
\end{lemma}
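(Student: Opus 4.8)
The plan is to derive the lemma directly from L\"ob's theorem applied to the single sentence $\forall x\,\F(x)$. Recall that L\"ob's theorem says that if $T\vdash \Prm{T}(\goed{\sigma})\to\sigma$ then $T\vdash\sigma$. Taking $\sigma$ to be $\forall x\,\F(x)$, it therefore suffices to establish the one conditional
\[T\vdash \Prm{T}(\goed{\forall x\,\F(x)})\to \forall x\,\F(x),\]
since L\"ob then immediately upgrades it to the desired $T\vdash\forall x\,\F(x)$. So the entire argument reduces to proving this conditional inside $T$.

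To do so I would argue in $T$ and assume $\Prm{T}(\goed{\forall x\,\F(x)})$. The key observation is that for each $x$ the implication $(\forall z\,\F(z))\to \forall y\prec \dot x\,\F(y)$ is logically valid, and a formal $T$-proof of its instance can be computed primitive-recursively from a numeral for $x$; hence $T$ proves
\[\forall x\,\Prm{T}\big(\goed{(\forall z\,\F(z))\to \forall y\prec \dot x\,\F(y)}\big).\]
Since $\forall z\,\F(z)$ is the assumed-provable sentence $\forall x\,\F(x)$ (up to renaming the bound variable), combining this with the standing assumption and the second Hilbert--Bernays--L\"ob derivability condition (provable closure of $\Prm{T}$ under modus ponens) yields $\forall x\,\Prm{T}(\goed{\forall y\prec \dot x\,\F(y)})$. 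Now the hypothesis that $\F$ is $\prec$-reflexively progressive with respect to $T$ supplies exactly $\forall x[\Prm{T}(\goed{\forall y\prec \dot x\,\F(y)})\to \F(x)]$, so we conclude $\forall x\,\F(x)$ in $T$. Discharging the assumption gives the required conditional.

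The only step that needs care — and the one I expect to be the main, though still routine, obstacle — is the uniform formalized implication $\forall x\,\Prm{T}(\goed{(\forall z\,\F(z))\to\forall y\prec\dot x\,\F(y)})$. This is a standard consequence of the fact that one can primitive-recursively produce, from a numeral for $x$, a formal $T$-proof of the corresponding instance, together with the derivability conditions for $\Prm{T}$; once it is in place, the remainder is pure propositional manipulation of the provability predicate. I note that the argument uses no property of $\prec$ beyond its appearance in the statement, which is precisely why reflexive induction holds for an arbitrary binary relation $\prec$.
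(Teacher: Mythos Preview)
Your proposal is correct and follows essentially the same route as the paper: apply L\"ob's theorem to $\sigma=\forall x\,\F(x)$, and inside $T$ pass from $\Prm{T}(\goed{\forall x\,\F(x)})$ to $\forall x\,\Prm{T}(\goed{\forall y\prec \dot x\,\F(y)})$ and then invoke reflexive progressiveness. The only difference is that you spell out the passage via the derivability conditions, whereas the paper compresses it into a single ``Then also''.
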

\begin{proof} By L\"ob's theorem it suffices to show
\begin{eqnarray}\label{ref1} &&T\vdash \Prm{T}(\goed{\forall x\,\F(x)})\to \forall x\,\F(x).\end{eqnarray}
Reasoning in $T$, assume  $\Prm{T}(\goed{\forall x\,\F(x)})$. Then also  $
\forall x[\Prm{T}(\goed{\forall y\prec \dot{x}\,\F(y)})$, whence, by $\prec$-reflexive progressiveness,
$\forall x\,\F(x)$, ascertaining (\ref{ref1}).
\end{proof}

\kommentar{Assuming $T\vdash \forall x\,\Prm{T}(\goed{\forall y\prec \dot{x}\,\F(y)})$

$T\vdash \forall x[\forall y\prec x\,\Prm{T}(\goed{F(\dot{y})})\to F(x)]$,
we deduce \begin{eqnarray*} T\vdash \Prm{T}(\goed{\forall x\,F(x)}) &\to & \forall x\,\Prm{T}(\goed{\forall y \prec \dot{x}\,
F({y})}) \\ &\to & \forall x\,F(x)\end{eqnarray*}
L\"ob's Theorem for $T$ then yields 
\[T\vdash  \forall x\,F(x).\]
}

\begin{theorem} 
 Let $\C$ be an arithmetic sentence and $\mc L$ be the Kleene-Brouwer ordering on   $\mathbb{B}_{\C}$.
Then, for any node $\sigma\in \mathbb{B}_\psi$, \begin{eqnarray}\label{KB_R_C}\mathsf{RFN}(\mathsf{RFN}^{(\mc L,\sigma)}(\PA)) \vdash \Gamma_\sigma, \end{eqnarray}
where $\Gamma_\sigma$ is the sequent in the node $\sigma$. And in particular for the root node $\langle\rangle$:
\begin{eqnarray}\label{KB_R_G}\mathsf{RFN}(\mathsf{RFN}^{(\mc L,\langle\rangle)}(\PA)) \vdash \psi\end{eqnarray}
\end{theorem}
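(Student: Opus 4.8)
The plan is to prove the statement by \emph{reflexive induction} along $\mc L$, using the lemma just established, rather than by appealing to well-foundedness of $\mathbb{B}_\C$. Write $R_\sigma:=\mathsf{RFN}^{(\mc L,\sigma)}(\PA)$ and $S_\sigma:=\mathsf{RFN}(R_\sigma)$, and let $P(\sigma)$ be the arithmetical formula $\sigma\in\mathbb{B}_\C\to\Prm{S_\sigma}(\goed{\Gamma_\sigma})$, read as ``the sequent labelling $\sigma$ is provable in $S_\sigma$''. This is $\Sigma_1$ in $\sigma$, since $\sigma\mapsto\goed{\Gamma_\sigma}$ is primitive recursive and $S_\sigma$ has a $\Sigma_1$ axiomatization uniformly in $\sigma$. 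The conclusion \eqref{KB_R_C} is exactly $P(\sigma)$ for every node, so it suffices to show $\PA\vdash\forall\sigma\,P(\sigma)$ and then invoke soundness of $\PA$; \eqref{KB_R_G} is the instance $\sigma=\langle\rangle$, where $\Gamma_{\langle\rangle}=\C$. By the reflexive induction lemma it is enough to check that $P$ is $\prec^{\mc L}$-reflexively progressive with respect to $\PA$, namely that
\[\PA\vdash\forall\sigma\big(\Prm\PA(\goed{\forall\tau\prec^{\mc L}\dot\sigma\,P(\tau)})\to P(\sigma)\big).\]
Just as in the reflexive induction lemma itself, this argument nowhere uses that $\mc L$ is a well-order: the statement is in fact correct for \emph{every} arithmetic sentence $\C$, being vacuous (via inconsistency of $S_\sigma$) when $\mathbb{B}_\C$ is ill-founded, and it is only the separate fact that $\mathbb{B}_\C$ is well-founded for true $\C$ (Case~I) that makes it useful.

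To verify reflexive progressiveness I would reason inside $\PA$, fix a node $\sigma$, assume $\Prm\PA(\goed{\forall\tau\prec^{\mc L}\dot\sigma\,P(\tau)})$, and split on the form of $\Gamma_\sigma$. If $\Gamma_\sigma$ is axiomatic it contains a true literal, so $\PA\vdash\Gamma_\sigma$ and hence $\Prm{S_\sigma}(\goed{\Gamma_\sigma})$ outright, with no use of the hypothesis. Otherwise $\Gamma_\sigma=\Gamma_\sigma',\E,\Gamma_\sigma''$ has a redex $\E$, and the $\prec^{\mc L}$-predecessors of $\sigma$ include all its children in $\mathbb{B}_\C$, since in the Kleene--Brouwer order every node lies above its descendants. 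The key structural facts, all provable in $\PA$ uniformly in the children $\tau$, are that $S_\tau\subseteq R_\sigma$ (immediate from $\tau\prec^{\mc L}\sigma$ and the definition of $R_\sigma$), and that the child sequents logically imply $\Gamma_\sigma$: for $\E\equiv\E_0\vee\E_1$ and for $\E\equiv\exists x\,\F(x)$ there is a single child whose sequent implies $\Gamma_\sigma$, while for $\E\equiv\E_0\wedge\E_1$ the two children $\Gamma_\sigma',\E_0,\Gamma_\sigma''$ and $\Gamma_\sigma',\E_1,\Gamma_\sigma''$ jointly imply it. In each of these finite cases the hypothesis gives $\Prm\PA(\goed{\Prm{R_\sigma}(\goed{\Gamma_\tau})})$ for the relevant children $\tau$ (instantiating $P$ and using $S_\tau\subseteq R_\sigma$); pushing this up to $S_\sigma\supseteq\PA$ and applying the reflection axiom $\Prm{R_\sigma}(\goed{\Gamma_\tau})\to\Gamma_\tau$ of $S_\sigma$ yields $\Prm{S_\sigma}(\goed{\Gamma_\tau})$, and the logical implication then gives $\Prm{S_\sigma}(\goed{\Gamma_\sigma})=P(\sigma)$.

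The main obstacle is the $\omega$-rule case $\E\equiv\forall x\,\F(x)$, where $\sigma$ has infinitely many children $\sigma_m$ with $\Gamma_{\sigma_m}=\Gamma_\sigma',\F(\underline m),\Gamma_\sigma''$ that must be combined at a single reflection level; this is precisely where \emph{uniform} rather than local reflection is essential. Writing $D$ for the side disjunction $\bigvee\Gamma_\sigma'\vee\bigvee\Gamma_\sigma''$ and setting $\varphi(x):=D\vee\F(x)$, so that $\Gamma_{\sigma_m}$ is $\varphi(\underline m)$ and $\Gamma_\sigma$ is logically $\forall x\,\varphi(x)$, the plan is: from the hypothesis together with $\sigma_m\prec^{\mc L}\sigma$ and $S_{\sigma_m}\subseteq R_\sigma$ (uniformly in $m$) obtain $\Prm\PA(\goed{\forall x\,\Prm{R_\sigma}(\goed{\varphi(\dot x)})})$; transfer this to $S_\sigma$ via $\PA\subseteq S_\sigma$; and combine it with the uniform reflection instance $\forall x(\Prm{R_\sigma}(\goed{\varphi(\dot x)})\to\varphi(x))$, an axiom of $S_\sigma=\mathsf{RFN}(R_\sigma)$, to conclude $\Prm{S_\sigma}(\goed{\forall x\,\varphi(x)})=P(\sigma)$. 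Once reflexive progressiveness is verified in all cases, the reflexive induction lemma delivers $\PA\vdash\forall\sigma\,P(\sigma)$, and soundness of $\PA$ yields \eqref{KB_R_C} and, at the root, \eqref{KB_R_G}. Apart from the $\omega$-rule case, the only remaining work is routine formalization bookkeeping: checking inside $\PA$ that $m\mapsto\sigma_m$ is total with values $\prec^{\mc L}\sigma$, that $S_\tau\subseteq R_\sigma$ and the sequent implications are $\PA$-provable, and that provability transfers along these facts under $\Prm\PA$.
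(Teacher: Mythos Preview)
Your proposal is correct and follows essentially the same approach as the paper: both establish the claim by reflexive induction along $\mc L$ (equivalently, L\"ob's theorem), with the $\omega$-rule case handled exactly as you describe by passing from the $\PA$-provable $\forall m\,\Prm{R_\sigma}(\goed{\varphi(\underline m)})$ to $\Prm{S_\sigma}(\goed{\forall x\,\varphi(x)})$ via a single instance of uniform reflection over $R_\sigma$. The paper's proof is terser---it only spells out the $\forall$ case and leaves the finite-branching cases implicit---while you additionally handle the axiomatic, $\vee$, $\wedge$, $\exists$ cases and make explicit the inclusions $S_\tau\subseteq R_\sigma$ and the bookkeeping, but the underlying argument is the same.
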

\begin{proof}We will prove the $\mc L$-reflexive progressivity of (\ref{KB_R_C}), which implies that (\ref{KB_R_C}) is provable in $\mathsf{PA}$ and hence true. For this, we reason in a $\PA$-formalizable manner.

Suppose that it is $\PA$-provable that for all $\tau\prec^{\mc L}\sigma$ we have
\begin{eqnarray}\label{MR1} \mathsf{RFN}(\mathsf{RFN}^{(\mc L,\tau)}(\PA))\vdash \Gamma_{\tau}.\end{eqnarray} 
Our goal is to prove (\ref{KB_R_C}).
The form of $\Gamma_\sigma$ can be effectively determined. The most interesting case is
the one where $\Gamma_\sigma$ is of the form \[\Gamma^{1}_\sigma,\forall x\theta(x),\Gamma^{2}_\sigma\] with redex $\forall x\theta(x)$. Then there are effectively determinable $\sigma_n\prec^{\mc L} \sigma$
such that $\Gamma_{\sigma_n}$ is of the form \[\Gamma^{1}_\sigma,\theta(\underline{n}),\Gamma^{2}_\sigma,\] and from (\ref{MR1}) it follows that it is $\mathsf{PA}$-provable that 
\begin{eqnarray}\label{MR2} && \forall n\;\big(\mathsf{RFN}(\mathsf{RFN}^{(\mc L,\sigma_x)}(\PA))\vdash \Gamma^{1}_\sigma,\theta(\underline{n}),\Gamma^{2}_\sigma\big).\end{eqnarray}
And since it is $\mathsf{PA}$-provable that all $\sigma_n$ are smaller than $\sigma$, we conclude that $\mathsf{PA}$-provably we have 
\begin{eqnarray}\label{MR3} && \forall n\;\big(\mathsf{RFN}^{(\mc L,\sigma)}(\PA)\vdash \Gamma^{1}_\sigma,\theta(\underline{n}),\Gamma^{2}_\sigma\big).\end{eqnarray}
Thus, applying reflection for $\mathsf{RFN}^{(\mc L,\sigma)}(\PA)$ to (\ref{MR3}) we conclude that we have \begin{eqnarray}\label{MR4} &&\mathsf{RFN}(\mathsf{RFN}^{(\mc L,\sigma)}(\PA))\vdash \forall y \big(\bigvee \Gamma^{1}_\sigma\lor \theta(y)\lor \bigvee\Gamma^{2}_\sigma\big),\end{eqnarray}
for a fresh new variable $y$.
Given that $y$ is not free in either $\Gamma^1_\sigma$ or in $\Gamma^2_\sigma$, we get 
\begin{eqnarray}\label{MR5} &&\mathsf{RFN}(\mathsf{RFN}^{(\mc L,\sigma)}(\PA))\vdash \Gamma_\sigma.\end{eqnarray} 
\end{proof}

\subsection{Upper bounds on the size of the Stammbaum}
One can show that the ordinal height of the Stammbaum ${\mathbb B}_{\C}$ of a true sentence $\C$
is less than $\omega^2$. More precisely, if the syntactic buildup of $C$ from literals takes $k$ steps,
then the height of ${\mathbb B}_{\C}$ is strictly less than $\omega\cdot (k+1)$. This follows by induction on $k$, by showing the more general result that any sequent containing a true formula of complexity $k$ has 
a Stammbaum of height strictly less than $\omega\cdot (k+1)$.

As a result of the foregoing, we then get that the Kleene-Brouwer ordering $\prec$ of ${\mathbb B}_{\C}$ 
has an order-type strictly less than $\omega^{\omega\cdot(k+1)}$, and thus
strictly less than $\omega^{\omega^2}$. Later in this paper we shall improve this bound by developing progressions of theories staying below $\omega^{\omega}$.

\section{A theorem of Ash and Knight in $\mathsf{ACA_0}$}\label{upperbounds_sect}
The goal of this section is to prove a version of a theorem of Ash and
Knight~\cite[Example 3]{ash1990} from computable structure theory in $\mathsf{ACA_0}$. In
its original formulation, this theorem connects membership in hyperarithmetical sets to the well-orderedness of certain linear orders.
We are now ready to state the version of Ash and Knight's theorem we have in mind. In the statement of this theorem and what follows $\eta$ denotes the order-type of the standard ordering on $\mathbb Q$. 
\begin{theorem}
\label{thm:akwo}
 Let $n\in\omega$. Then for every $\Pi_{2n+1}$ formula $\varphi(x)$ there is a uniformly computable sequence of linear orders $(\mc L_i)_{i\in\omega}$ such that $\mathsf{ACA}_0$ proves that for any $i$
 \[ \mc L_i \cong \begin{cases}
  \omega^n & \text{ if }\varphi(i)\\
  \omega^n(1+\eta) &\text{ if } \lnot\varphi(i).
\end{cases}\]

\end{theorem}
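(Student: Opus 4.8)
The plan is to prove this by induction on $n$. I need to construct, for each $\Pi_{2n+1}$ formula $\varphi(x)$, a uniformly computable sequence of orders whose order-type is $\omega^n$ exactly when $\varphi(i)$ holds and $\omega^n(1+\eta)$ otherwise, with the equivalence provable in $\mathsf{ACA}_0$. The intuition driving the construction is that $\omega^n(1+\eta)$ is the ``limit'' of copies of $\omega^n$ arranged densely, so it is well-founded iff we never actually realize the dense part, and the shape of the quantifier alternation in $\Pi_{2n+1}$ should be matched to the nesting depth of the ordinal exponent.

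\begin{proof}[Proof proposal]
The plan is to argue by induction on $n$, reducing the case of a $\Pi_{2n+1}$ formula to the case of a $\Pi_{2n-1}$ formula by peeling off two quantifiers, and matching each such reduction to multiplication of the ambient order-type by $\omega$. For the base case $n=0$, a $\Pi_1$ formula $\varphi(i)=\forall y\,\theta(i,y)$ with $\theta$ decidable must be turned into an order that is $\omega^0=1$ if $\varphi(i)$ and $1+\eta$ if $\lnot\varphi(i)$; here I would place a single point and then, searching for a witness to $\lnot\varphi(i)$, begin inserting a densely-ordered cloud of extra points (a copy of $\eta$) precisely when and where the first counterexample appears, so that the order stays a single point exactly when no counterexample is ever found. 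The $\mathsf{ACA}_0$-provable equivalence here is a direct unwinding of the decidable matrix.

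\textbf{Inductive step.} Writing a $\Pi_{2n+1}$ formula as $\varphi(i)=\forall y\,\exists z\,\phi(i,y,z)$ where $\phi$ is $\Pi_{2n-1}$, I would apply the induction hypothesis to the inner $\Pi_{2n-1}$ formula $\psi(i,y,z):=\phi(i,y,z)$ (suitably arranged as a formula of the right complexity with the extra parameters absorbed into the index) to obtain a uniformly computable family of orders $\mc M_{i,y,z}$ that is $\omega^{n-1}$ if $\phi(i,y,z)$ and $\omega^{n-1}(1+\eta)$ otherwise. The target order $\mc L_i$ is then built as an $\omega$-indexed sum, one block per value of the universally quantified $y$, where the $y$-th block searches through candidate witnesses $z$: as long as no $z$ with $\phi(i,y,z)$ has been confirmed, the block keeps the inner orders in their ``bad'' shape, contributing an $\omega^{n-1}(1+\eta)$-like tail, whereas once a good $z$ is found the block collapses to a clean copy of $\omega^{n-1}$. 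Summed over all $y$, this yields $\omega^{n-1}\cdot\omega=\omega^n$ when every $y$ eventually has a witness (i.e. $\varphi(i)$ holds) and an order of type $\omega^n(1+\eta)$ when some $y$ has none, with the dense perturbation from the failing block propagating up to the top.

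\textbf{The main obstacle.} The delicate part is verifying inside $\mathsf{ACA}_0$ that the two cases produce exactly $\omega^n$ and $\omega^n(1+\eta)$ (and not merely well-founded versus ill-founded orders), since the ordinal arithmetic must be carried out as an explicit order-isomorphism construction rather than by appeal to classical ordinal theory. In particular I expect the $\lnot\varphi(i)$ case to require care: one must check that a single failing $y$-block, sitting among genuine $\omega^n$-blocks, together with the inductively-supplied dense perturbation assembles precisely into $\omega^n(1+\eta)$ and not some other order-type, which amounts to proving in $\mathsf{ACA}_0$ the absorption identities $\omega^{n-1}(1+\eta)+\omega^{n-1}\cong\omega^{n-1}(1+\eta)$ and that $\omega^{n-1}\cdot\omega\cong\omega^n$. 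The uniformity of the computable indices and the bookkeeping that ties the quantifier search to the on-the-fly insertion of the dense factor are the technical heart; getting the $\mathsf{ACA}_0$-provable isomorphisms to be genuinely uniform in $i$ is where I anticipate the bulk of the effort.
\end{proof}
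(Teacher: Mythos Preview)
Your plan is a different route from the paper's, and as written it has a real gap at the existential-quantifier step.

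The paper does \emph{not} do a syntactic induction that stays at the computable level. Instead it first builds, at the $\Delta_{2n}$ level, a very simple order $\mc C_i$ of type $1$ or $1+\eta$ according to whether $\varphi(i)$ holds (this is easy once you are allowed a $\emptyset^{(2n-1)}$ oracle). It then alternately applies two jump-inversion lemmas: one (\cref{lem:jumpinv1}) takes a $\Delta^0_2(X)$ order $\mc L$ to an $X$-computable order of type $\omega\cdot\mc L$ with $X$-computable successor, and the other (\cref{lem:jumpinv2}) takes a $\Delta^0_2(X)$ order of type $\omega\cdot\mc K$ with $\Delta^0_2(X)$ successor to an $X$-computable isomorphic copy. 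Iterating this pair $n$ times drops from $\Delta_{2n}$ to computable while multiplying the order type by $\omega^n$. All the work is in those two lemmas, which are driven by the limit lemma.

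Your inductive step instead tries to handle the block $\forall y\,\exists z$ at the computable level by having the $y$-th block ``search through candidate witnesses $z$'' and ``collapse'' once some $z$ with $\phi(i,y,z)$ is ``confirmed''. But $\phi$ is $\Pi_{2n-1}$: there is no stage at which a computable construction can confirm it, not even semidecide it, so there is no moment at which the search can terminate and collapse. The inductive hypothesis gives you only the isomorphism type of $\mc M_{i,y,z}$ as a record of $\phi(i,y,z)$, and that bit cannot be computably extracted. Nor does any naive combination of the $\mc M_{i,y,z}$ over $z$ produce an order of the right type for the $\Sigma_{2n}$ predicate $\exists z\,\phi(i,y,z)$; for instance $\sum_z \mc M_{i,y,z}$ in the all-bad case has type $\omega^{n-1}(1+\eta)\cdot\omega$, not $\omega^{n-1}(1+\eta)$. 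Even the order-type assembly you flag as the ``main obstacle'' already fails: if a single bad block of type $\omega^{n-1}(1+\eta)$ sits at position $y_0$ among good $\omega^{n-1}$-blocks, the resulting sum is $\omega^{n-1}\cdot(y_0+(1+\eta)+\omega)$, and since $\omega^{n-1}\cdot L\cong\omega^{n-1}\cdot L'$ forces $L\cong L'$ (the block-first-elements are exactly the elements with no immediate predecessor), this is not $\omega^{n-1}(1+\eta)$. The missing ingredient is precisely the jump-inversion machinery that trades one level of arithmetical complexity for one factor of $\omega$.
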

In order to prove \cref{thm:akwo} we will need a few more lemmas. The first is
the relativized version of Shoenfield's classical limit lemma. See~\cite{soare1999} for the classical proof of this result and more
background on computability theory.

Also to prove \cref{thm:akwo} we will need the notions of $\Delta^0_n(X)$ linear orders. Our
definitions follow Simpson~\cite{simpson2009} and can be done in
$\mathsf{ACA}_0$.

In order to define relative computability, fix for each $n\in \omega$ a
universal $\Pi^0_n$ formula $\pi_n(e,x,X)$ with free number variables $e$ and
$x$, and a free set variable $X$. Here $\Pi^0_n$ universality should be
$\mathsf{ACA}_0$-verifiable, i.e., for any $\Pi^0_n$ formula $\varphi(e,x,X)$ we should have
\[\mathsf{ACA}_0\vdash \forall e\exists e'\forall x \forall X (\varphi(e,x,X)\lr \pi_n(e',x,X)).\]

This allows us to define some standard notions within $\mathsf{ACA}_0$. We say that a set $Y$ is $\Pi^0_n(X)$ if there exists $e$ such that $x\in Y \lr \pi_n(e,x,X)$. We say that a set $Y$ is $\Sigma^0_n(X)$ if its complement $\mathbb{N}\setminus Y$ is $\Pi^0_n(X)$. We say that a set is $\Delta^0_n(X)$ if it is simultaneously $\Pi^0_n(X)$ and $\Sigma^0_n(X)$. A set $Y$ is $X$-computable if it is $\Delta^0_1(X)$. A set $Y$ is \emph{computable} if it is $\emptyset$-computable.

A \emph{linear order $\mc L=(L,\peql)$} is $\Delta^0_n(X)$ if $\peql$ is
$\Delta^0_n(X)$. Notice that this implies that the
universe $L$ is also $\Delta^0_n(X)$, since by reflexivity $x\in L$ iff
$x\peql x$. If $\mc L$ is $\Delta^0_1(X)$, then we say that $\mc L$ is
$X$-computable. We will denote linear orders by calligraphic letters and
their universes $L$ by the corresponding capital letters.
A sequence $(\mc A_i)_{i\in\omega}$ of linear orders is \emph{uniformly
$\Delta^0_n(X)$} if there is a $\Delta^0_n(X)$ set $S$ such that its $i^\text{th}$
column is equal to $\peqai$, i.e.,
\[ S^{[i]}=\{x: \langle i,x\rangle\in S\}=\peqai. \]
As usual, a sequence of linear orders is \emph{uniformly $X$-computable} if it
is uniformly $\Delta^0_1(X)$.

In a standard manner, using Ackermann membership, we encode finite sets of
naturals by natural numbers, i.e., the elements of a number $A$ with the binary expansion $a_{n-1}\ldots a_0$ are all $i<n$ such that $a_i=1$. Let $(A_i)_{i\in\omega}$ be a sequence of finite sets of naturals. We say that a set $X$ is the limit of $(A_i)_{i\in\omega}$ and write $X=\lim A_i$ if
\[\begin{array}{lcl}
    x\in X & \text{iff } & \exists i (\forall j>i) x\in A_j,\\
    x\not\in X & \text{iff } & \exists i(\forall j>i) x\not\in A_j.
  \end{array} \] 

  \begin{lemma}[$\mathsf{ACA}_0$; Limit Lemma]\label{lem:limitcomp}
  Fix a set $X$. A set $Y$ is
  $\Delta^0_2(X)$ if and only if there is an $X$-computable sequence of finite sets $(A_i)_{i\in\omega}$ such that $Y=\lim A_i$.
\end{lemma}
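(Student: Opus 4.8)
The plan is to prove the Limit Lemma relativized to $X$, formalized in $\mathsf{ACA}_0$, by establishing both directions of the biconditional. The statement is the standard Shoenfield limit lemma, so the proof should mirror the classical argument while taking care that each step is available in $\mathsf{ACA}_0$ (in particular, only arithmetical comprehension, not full comprehension, is used).

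\medskip

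For the easier direction, suppose $(A_i)_{i\in\omega}$ is an $X$-computable sequence of finite sets with $Y=\lim A_i$. Then the defining conditions for membership read
\[
x\in Y \;\lr\; \exists i\,(\forall j>i)\, x\in A_j,
\qquad
x\notin Y \;\lr\; \exists i\,(\forall j>i)\, x\notin A_j.
\]
Since the relation ``$x\in A_j$'' is $X$-computable (i.e.\ $\Delta^0_1(X)$) in the parameters $x,j$, the right-hand side ``$\exists i\,(\forall j>i)\,x\in A_j$'' is a $\Sigma^0_2(X)$ condition, while ``$x\notin Y$'' being equivalent to a matching $\Sigma^0_2(X)$ condition makes $Y$ also $\Pi^0_2(X)$. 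Hence $Y$ is $\Delta^0_2(X)$. To make this fully rigorous in $\mathsf{ACA}_0$, I would invoke the $\mathsf{ACA}_0$-verifiable universality of $\pi_n$ to produce the required indices $e$ witnessing that $Y$ is both $\Pi^0_2(X)$ and $\Sigma^0_2(X)$.

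\medskip

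For the converse, suppose $Y$ is $\Delta^0_2(X)$. The standard idea is to approximate $Y$ using the fact that a $\Delta^0_2(X)$ set is $X'$-computable, where $X'$ is the Turing jump of $X$, and $X'$ itself is a limit of $X$-computable approximations. Concretely, I would write $x\in Y \lr \exists u\, R(x,u)$ and $x\notin Y \lr \exists v\, S(x,v)$ with $R,S$ being $X$-computable ($\Pi^0_1(X)$ matrices contracted appropriately), reflecting that $Y$ is $\Sigma^0_2(X)$ and $\Pi^0_2(X)$. Then I would define the finite approximation $A_i$ by a bounded search: put $x\in A_i$ (for $x<i$) according to whichever of the two existential witnesses is found first within the search bound $i$, defaulting consistently if neither is found yet. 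Because for each $x$ exactly one of the two existential statements is true, the winning side eventually stabilizes, so $\lim A_i = Y$. The sequence $(A_i)$ is $X$-computable since each $A_i$ is obtained by a bounded ($X$-computable) search, and arithmetical comprehension suffices to form the set coding the sequence.

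\medskip

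The step I expect to require the most care is carrying out the converse direction entirely within $\mathsf{ACA}_0$: one must verify that the bounded-search definition of $(A_i)$ is genuinely $\Delta^0_1(X)$ (so that the sequence is $X$-computable in the formal sense via an index) and that the convergence claim $Y=\lim A_i$ is provable using only arithmetical comprehension. The delicate point is that ``the winning side eventually stabilizes'' must be argued from the $\mathsf{ACA}_0$-provable properties of the $\Sigma^0_2(X)$ and $\Pi^0_2(X)$ definitions of $Y$, namely that for every $x$ exactly one of the two matching existential conditions holds; this is where the interplay of the two complementary definitions of a $\Delta^0_2(X)$ set is essential, and where I would spell out the formalization most carefully.
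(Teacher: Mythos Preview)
Your overall strategy is the standard one and matches the paper's in spirit, and your easy direction $(\Leftarrow)$ is fine. However, your description of the converse direction contains a genuine gap that the paper's proof addresses explicitly.

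You write $x\in Y\leftrightarrow \exists u\,R(x,u)$ and $x\notin Y\leftrightarrow \exists v\,S(x,v)$ and then say $R,S$ are ``$X$-computable ($\Pi^0_1(X)$ matrices contracted appropriately)''. These two descriptions conflict: if $R,S$ were genuinely $X$-computable, $Y$ would be $\Delta^0_1(X)$, not merely $\Delta^0_2(X)$. In fact the inner predicates must be $\Pi^0_1(X)$, and this is exactly what makes ``whichever of the two existential witnesses is found first within the search bound $i$'' not well-defined as stated: a $\Pi^0_1(X)$ condition cannot be \emph{verified} by a bounded search, only approximated, and an apparent witness at stage $i$ may be refuted at a later stage. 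Both sides can therefore oscillate, and the naive first-found rule need not stabilize to the correct answer.

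The paper handles this by defining bounded approximations $B_i,C_i$ (replacing the inner universal quantifier by a bound $i$) and then comparing, for each $y$, not whether a witness has been found, but how \emph{long} each side has been continuously satisfied via an ``age'' function $age_B(i,y),\,age_C(i,y)$; one puts $y\in A_i$ iff $age_B(i,y)>age_C(i,y)$. The point is that the true side eventually acquires a permanently surviving witness while every apparent witness on the false side is eventually refuted, so the true side's age eventually dominates. Your proposal is salvageable along these lines, but as written it glosses over precisely the step that requires an idea.
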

\begin{proof}
  $(\Ra)$.
  As $Y$ is $\Delta^0_2(X)$ there are $e_0$, $e_1$ such that
  \begin{align*}
      y\in Y &\lr \pi_2(e_0,y,X)\lr \forall u\exists v R(e_0, y,u,v,X)\\
      y\not\in Y &\lr \pi_2(e_1,y,X)\lr \forall u\exists v R(e_1,y,u,v,X)
  \end{align*}
  for some formula $R$ containing only bounded quantifiers. First, for fixed $i$, define sets $B_i$ and $C_i$ as follows.
  \begin{align*}
    y\in B_i &\lr y<i \land (\exists u<i) (\forall v<i) R(e_0,y,u,v,X)\\
    y\in C_i &\lr y<i \land (\exists u<i) (\forall v<i) R(e_1,y,u,v,X)
  \end{align*}
  The sequences $(B_i)_{i\in\omega}$ and $(C_i)_{i\in\omega}$ are clearly $X$-computable and it is easy to see that
  \begin{align*}
    y\in Y &\lr \exists i (\forall j>i) y\in B_i,\\
    y\not\in Y &\lr \exists i (\forall j>i) y\in C_i.
  \end{align*}
  Let $age_B(i,y)=\min (\{i+1\}\cup\{k\le i: y\not\in B_{i-k}\})$ and
  $age_C(i,y)=\min
  (\{i+1\}\cup\{k\le i\mid y\not\in C_{i-k}\})$. Clearly both $age_B$ and $age_C$ are $X$-computable functions from $\mathbb{N}^2$ to $\mathbb{N}$.
  We define 
  \[ A_i=\{y<i: age_B(i,y)>age_C(i,y)\}.\]
  The sequence $(A_i)_{i\in\omega}$ is clearly uniformly $X$-computable. To see
  that $Y$ is its limit notice that if $y\in Y$, then there is $i_0$ such that
  $(\forall j>i_0)y\in B_j$ and that for every $i$ with $y\in C_i$, there is
  $j>i$ such that $y\not\in C_j$. Let $i_1$ be the least such $j$ with $y\not\in
  C_j$ greater than $i_0$. Then for all $j>i_1$, $age_B(j,y)>age_C(j,y)$ and
  hence $y\in A_j$. That $y\not\in Y$ if there is $i$ such that for all $j>i$ $y\not\in A_j$ can be shown in a similar manner.

  $(\La)$. Assume $(A_i)_{i\in\omega}$ is uniformly $X$-computable and let $Y$ be the limit of $(A_i)_{i\in\omega}$. There are $e_0$ and $e_1$ such that
  \[ x\in A_i \lr \pi_1(e_0, \langle i,x\rangle) \quad \text{and} \quad x\not\in A_i \lr \pi_1(e_1,\langle i,x\rangle).\]
  From this, one can easily extract a pair of $\Pi^0_2$ formulas which, given parameter $X$, define the membership relation of $Y$, respectively, its negation. Thus $Y$ is $\Delta^0_2(X)$.
\end{proof}
From \cref{lem:limitcomp} we will now derive a limit lemma for linear orders. 
\begin{definition}
    A sequence $(\mc A_i)_{i\in\omega}$ of finite linear orders is
\emph{locally coherent} if $\preceq^{\mc A_i}\upharpoonright (A_i\cap A_{i+1})
=\preceq^{\mc A_{i+1}}\upharpoonright (A_i\cap A_{i+1})$, for each $i$.
\end{definition}
 We will be interested in the situation when a linear order 
$\mc L$ is the limit of a locally coherent sequence of finite linear orders
$(\mc A_i)_{i\in\omega}$. In this case, we think about $\mc L$ as the
order obtained by the following procedure. We start with the order $\mc A_0$.
At each stage $i$ we switch from $\mc A_{i}$ to $\mc A_{i+1}$ by removing all
the elements $x\in A_i\setminus A_{i+1}$ and adding all the elements in $x\in
A_{i+1}\setminus A_i$. The order $\mc L$ consists of all the elements $x$ that
are present in some $\mc A_i$ and aren't removed in any $\mc A_j$, for $j>i$.
Whether $x\peql y$ can be checked
in any $\mc A_{i}$ such that both $x,y$ aren't removed at any stage $\ge i$.

We say that $S$ is a partial successor relation on a linear order $\mc L$ if
\begin{enumerate}
\item $\langle x,y\rangle \in S\Rightarrow x\prec^{\mc L}y$,
\item $\langle x,y\rangle \in S\Rightarrow (\forall z\in L) (z\peql x\lor
  y\peql z)$.
\end{enumerate}
A finite linear order with a partial successor relation $\mc A$ is a triple $(A,
\preceq^{\mc A},S^{\mc A})$, where  $\preceq^{\mc A}$ is a finite linear order
and $S^{\mc A}$ is a finite set of pairs that is a partial successor relation
on $\preceq^{\mc A}$. We say that a sequence of finite linear orders with
partial successor relations $(\mc A_i)_{i\in \omega}$ is \emph{locally
coherent} if
$\preceq^{\mc A_i}\upharpoonright (A_i\cap A_{i+1}) =\preceq^{\mc
A_{i+1}}\upharpoonright (A_i\cap A_{i+1})$ and $S_i\upharpoonright (A_i\cap
A_{i+1}) \subseteq S_{i+1}\upharpoonright (A_i\cap A_{i+1})$. As for sequences of
finite linear orders we will be interested in the situation when a linear order
with the total successor relation $\mc L=\langle \preceq^{\mc L},S^{\mc
L}\rangle$ is the limit of a locally coherent sequence of finite linear orders
with partial successor relations $(\mc A_i)_{i\in \omega}$.

Note that for a linear order without successor relation $\mc L=\lim \mc A_i$ and
successive $x,y\in \mc L$, new elements $z$ might appear and disappear between
$x$ and $y$ inside $\mc A_i$'s for indefinitely large $i$. However, for a linear order with successor relation $\mc L=\lim \mc A_i$ and neighboring elements $x,y\in \mc L$, for large enough stages (after the point when both $x$ and $y$ will no longer be removed) we can guarantee that no more new elements will appear in $\mc A_i$'s between $x$ and $y$. 

Using this definition we get the following corollary of~\cref{lem:limitcomp}.
\begin{corollary}[$\mathsf{ACA}_0$]\label{cor:limloccoherent}
  Fix a set $X$. A linear order $\mc L$ is $\Delta^0_2(X)$
  if and only if there is an $X$-computable locally coherent sequence $(\mc A_i)_{i\in\omega}$ of
  finite linear orders with limit $\mc
  L$, i.e., 
  \begin{align*}x\in L &\lr \exists i(\forall j>i)\ x\in A_j\\ x\not\in L &\lr \exists i (\forall j>i)\ x\not\in A_j\\ x\peql y &\lr \exists
  i (\forall j>i)\ x\preceq^{\A_j} y.\end{align*}
\end{corollary}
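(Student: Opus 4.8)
The plan is to derive \cref{cor:limloccoherent} directly from the Limit Lemma (\cref{lem:limitcomp}) by encoding a linear order as a single set and transporting the limit characterization through that encoding, while being careful that the finite approximations we obtain are genuinely locally coherent linear orders rather than arbitrary finite sets of pairs. Recall that a linear order $\mc L=(L,\peql)$ is identified with the set $\peql$, and that it is $\Delta^0_2(X)$ precisely when $\peql$ is $\Delta^0_2(X)$ as a set. So the $(\Ra)$ direction starts by applying \cref{lem:limitcomp} to $Y=\peql$, obtaining an $X$-computable sequence of finite sets $(A_i)_{i\in\omega}$ with $\peql=\lim A_i$. The issue is that these $A_i$ need not themselves code linear orders, so I cannot take $\mc A_i$ to be $A_i$ verbatim.

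First I would massage the raw limit approximations into finite \emph{linear orders}. For each $i$, consider the field of the finite relation $A_i$ (the set of $x$ with $\langle x,x\rangle\in A_i$) and check, by a bounded search that is $X$-computable uniformly in $i$, whether $A_i$ restricted to that field is already a finite linear order. If it is, set $\preceq^{\mc A_i}=A_i$; otherwise perform a canonical repair: restrict to the largest initial segment of the field (under a fixed enumeration) on which $A_i$ happens to define a linear order, or simply fall back to the empty order. Since $\peql$ is a genuine linear order, for every finite subset $F$ of its field there is a stage beyond which $A_j$ agrees with $\peql$ on all pairs from $F$; hence for cofinitely many $j$ the restriction of $A_j$ to $F$ is a correct finite linear order, and the repaired $\mc A_j$ agrees with $\mc L$ on $F$. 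This is exactly what is needed to verify the three displayed limit equivalences for membership in $L$ and for $x\peql y$. Local coherence, namely $\preceq^{\mc A_i}\restrict(A_i\cap A_{i+1})=\preceq^{\mc A_{i+1}}\restrict(A_i\cap A_{i+1})$, does not come for free from the raw $A_i$, so I would obtain it by a further thinning: pass to a subsequence or adjust the repair so that whenever an element survives from stage $i$ to stage $i+1$, the order among surviving elements is copied from stage $i$. All of this is primitive bookkeeping on finite objects and is carried out inside $\mathsf{ACA}_0$ using $\Delta^0_2(X)$-to-limit passage already licensed by \cref{lem:limitcomp}.

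The $(\La)$ direction is the easy one: given an $X$-computable locally coherent sequence $(\mc A_i)_{i\in\omega}$ with limit $\mc L$, the three equivalences in the statement exhibit both $x\peql y$ and its negation as $\Sigma^0_2(X)$ conditions (an $\exists i\forall j>i$ prefix over an $X$-computable matrix), so $\peql$ is $\Delta^0_2(X)$, i.e. $\mc L$ is $\Delta^0_2(X)$. This is precisely the reverse implication of \cref{lem:limitcomp} applied to the set $\peql$, together with the observation that a uniformly $X$-computable sequence of finite orders is the same data as an $X$-computable sequence of finite sets.

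The main obstacle is the first direction, specifically ensuring that the repaired approximations are simultaneously (a) uniformly $X$-computable, (b) finite linear orders, (c) locally coherent, and (d) still converging to $\mc L$, all verified within $\mathsf{ACA}_0$. The tension is between local coherence, which demands that the order on surviving elements be frozen, and convergence, which demands that the approximations eventually reflect the true order $\peql$ on every pair; the resolution is that coherence only constrains \emph{surviving} elements, and any element whose order placement is still wrong can be temporarily deleted and reintroduced later, which is compatible with the limit semantics since an element is in $L$ iff it is eventually permanently present. I would make this precise with a stage-comparison argument analogous to the $age_B$/$age_C$ construction in the proof of \cref{lem:limitcomp}, reusing that machinery rather than reproving it.
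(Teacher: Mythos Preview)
Your proposal is correct and follows essentially the same route as the paper: for $(\Rightarrow)$ apply \cref{lem:limitcomp} to the set $\peql$ and repair the resulting finite sets into genuine finite linear orders (the paper does this by taking the largest subset of the field of $B_{i+1}$ on which $B_{i+1}$ is a linear order), and for $(\Leftarrow)$ simply note that a locally coherent sequence of finite orders is in particular a sequence of finite sets, so \cref{lem:limitcomp} applies directly. If anything you are more explicit than the paper about why local coherence requires an extra adjustment (deleting and reinserting elements whose relative order has not yet stabilized), a point the paper's verification passes over rather quickly.
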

\begin{proof}
  $(\Ra)$. From \cref{lem:limitcomp} we get a sequence $(B_i)_{i\in\omega}$ of
  finite sets with limit $\peql$. We may assume that no $B_i$ contains elements
  greater than $i$. We define a locally coherent sequence
  $(\A_i)_{i\in\omega}$ as follows. Let $\A_0$ be the empty order. Assume we have defined $\A_i$. To define $\A_{i+1}$ first
  let $C_{i+1}$ be the largest subset of $\{ x: \langle x,x\rangle\in B_{i+1}\}$ linearly ordered by $B_{i+1}$. 
  Then let 
  \[ A_{i+1}= C_{i+1} \text{ and } \preceq^{\A_{i+1}}=B_{i+1}\restrict A_{i+1}.\]

  This sequence is clearly uniformly $X$-computable. To see that it has limit
  $\mc L$, first consider $x,y\in L$ such that $x\peql y$. There exists $i$
  such that for all $j>i$ and $z,w\le \max(x,y)$ we have $\langle z,w\rangle\in B_j\iff z\peql w$. By definition, we then have
  that $x\preceq^{\A_{j}} y$. On the other hand, suppose that $x\not\in L$, then there is $i$ such that for all $j>i$, $\langle x,x\rangle\not\in B_i$. So for all $j>i$, $x\not\in A_j$.

  $(\La)$. A locally coherent sequence of finite linear orders $(\mc A_i)_{i\in
  \omega}$ is a special case of a sequence of finite sets. Thus this implication
  is a direct consequence of \cref{lem:limitcomp}.
%
\end{proof}
Using the same ideas as in its proof, we can extend \cref{cor:limloccoherent} to
work for locally coherent sequences of finite linear orders with partial successor
relations.

\begin{corollary}[$\mathsf{ACA}_0$]\label{cor:limloccoherentsucc}
  Fix a set $X$. A linear order
  $\mc L$ with successor relation $S^\mc L$ is $\Delta^0_2(X)$
  if and only if there is an $X$-computable locally coherent sequence $(\mc
  A_i)_{i\in\omega}$ of
  finite linear orders with successor relation with limit $\mc L$.
\end{corollary}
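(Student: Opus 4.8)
The plan is to mirror the proof of \cref{cor:limloccoherent}, now carrying the successor data alongside the order, and to treat the two directions separately. The backward direction is essentially the $(\La)$ argument of \cref{cor:limloccoherent}: a finite linear order with partial successor relation $\mc A_i=(A_i,\preceq^{\mc A_i},S^{\mc A_i})$ can be coded by a single finite set $D_i$ (pairing a code of $\preceq^{\mc A_i}$ with a code of $S^{\mc A_i}$), and local coherence together with the limit conditions says that membership of each element, each ordered pair, and each successor pair eventually stabilizes. For the successor relation this uses that on the elements surviving from some stage onward the relation only grows, so that ``eventually present'' coincides with ``eventually stably present'', and thus $\langle x,y\rangle\in S^{\mc L}$ is a genuine limit condition. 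Hence $\mc L$ together with $S^{\mc L}$ is the limit of the $X$-computable sequence $(D_i)_{i\in\omega}$, and \cref{lem:limitcomp} yields that it is $\Delta^0_2(X)$.

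For the forward direction, since $\mc L$ with $S^{\mc L}$ is $\Delta^0_2(X)$, the single set coding the pair $\langle \preceq^{\mc L},S^{\mc L}\rangle$ is $\Delta^0_2(X)$, so \cref{lem:limitcomp} supplies an $X$-computable sequence of finite sets from which I read off, at each stage $i$, a finite order guess and a finite successor guess. I would run the construction of \cref{cor:limloccoherent} on the order guesses to obtain $X$-computable finite linear orders $\mc A_i$ with limit $\mc L$, and then adjoin $S^{\mc A_{i+1}}$ by taking the pairs inherited from $S^{\mc A_i}$ on elements of $A_i\cap A_{i+1}$, together with the freshly flagged successor guesses $\langle x,y\rangle$ for which $x,y\in A_{i+1}$ are genuine neighbors in $\preceq^{\mc A_{i+1}}$. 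Inheritance is what is meant to force the growth condition, and retaining only neighbor pairs keeps each $S^{\mc A_{i+1}}$ a valid partial successor relation satisfying the two defining axioms.

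The main obstacle is local coherence of the successor part, namely $S^{\mc A_i}\restrict(A_i\cap A_{i+1})\subseteq S^{\mc A_{i+1}}\restrict(A_i\cap A_{i+1})$, because inheritance and validity can genuinely conflict: a pair $\langle x,y\rangle$ may sit in $S^{\mc A_i}$ with both endpoints surviving into $A_{i+1}$, while the new order guess inserts an element between them, so that keeping the pair violates the partial-successor axioms and dropping it violates coherence. I would resolve this exactly as the surrounding text foreshadows, by temporarily removing one endpoint (say the larger) from $A_{i+1}$ whenever such a conflict arises; this voids the coherence requirement for that pair while leaving later stages free to reintroduce the deleted element. The crucial point is that, because $\preceq^{\mc L}$ and $S^{\mc L}$ are the true limits, every element of $L$ is eventually permanent and every true successor pair eventually has no element strictly between its endpoints, so each pair triggers only finitely many such corrections; hence the induced deletions are finite and do not disturb the limit of the $\mc A_i$. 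It then remains to check that the successor guesses stabilize to be present exactly for the pairs in $S^{\mc L}$, which follows from the same finiteness of corrections, completing the argument.
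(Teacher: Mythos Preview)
Your backward direction is fine; the forward direction has a genuine gap. You build the orders $\mc A_i$ via \cref{cor:limloccoherent} and then patch in successor data, resolving a conflict (an inherited pair $\langle x,y\rangle\in S^{\mc A_i}$ with some $z$ newly between $x$ and $y$ in $\mc A_{i+1}$) by ejecting $y$. You then assert that ``every true successor pair eventually has no element strictly between its endpoints, so each pair triggers only finitely many such corrections.'' But this is precisely the property that the \cref{cor:limloccoherent} construction does \emph{not} have: the paper says explicitly just above the statement that for a limit without successor data, new elements may appear and disappear between successive $x,y\in\mc L$ for indefinitely large $i$. So for a genuine successor pair $\langle x,y\rangle\in S^{\mc L}$, infinitely many transient $z_k\notin L$ may enter between $x$ and $y$ in your underlying sequence; each time one does while the pair is committed, your rule ejects $y$, and then the pair is re-added once $z_k$ leaves (the guess says ``yes'' cofinally), ready to be broken by $z_{k+1}$. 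Hence $y$ is ejected infinitely often and fails to lie in the limit.

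The repair is to resolve the conflict the other way: keep the committed pair $\langle x,y\rangle$ and \emph{block the intruder} $z$ from entering $A_{i+1}$. This keeps $S^{\mc A_{i+1}}$ a valid partial successor relation and the coherence inclusion automatic. The danger now is a wrongly committed pair permanently excluding a genuine $z\in L$; you handle this with a separate mechanism driven by the successor guess: whenever the current stage's guess says $\langle x,y\rangle\notin S$, eject one endpoint so the pair is dropped. Since the $\Delta^0_2(X)$ guess stabilizes, a wrong pair is dropped after finitely many stages (freeing its interval), and a true pair is never dropped after its guess stabilizes; meanwhile every $z$ blocked by a true pair is necessarily not in $L$. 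With this modification your finiteness-of-corrections claim becomes correct and the rest of your argument goes through. This is still ``the same ideas'' as \cref{cor:limloccoherent}, which is all the paper asserts, but the choice of which element to delete is exactly where the extra content lies.
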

We are now ready to prove two lemmas crucial for our proof of \cref{thm:akwo}.
They give a sufficient condition for the existence of
computable linear orderings of order-type $\omega\cdot \mc L$ for any linear
ordering $\mc L$. Variations of these lemmas were first proven by Fellner~\cite{fellner1977}.
\begin{lemma}[$\mathsf{ACA_0}$]\label{lem:jumpinv1}
  Fix $X$ and let $\mc L$ be a $\Delta^0_2(X)$ linear order with a least element.  Then there is an $X$-computable linear order of order-type $\omega\cdot \mc L$ with an $X$-computable successor relation and $0$ as its least element.\footnote{We would like to thank Patrick Lutz and the anonymous referee who independently spotted a gap in the proof of Lemma \ref{lem:jumpinv1} from an earlier version of the paper.}
\end{lemma}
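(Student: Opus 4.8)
The plan is to realize $\omega\cdot\mc L$ as an $X$-computable order together with an $X$-computable successor function, building it in stages from a $\Delta^0_2(X)$ approximation of $\mc L$. First I would invoke \cref{cor:limloccoherent} to fix an $X$-computable locally coherent sequence $(\mc A_i)_{i\in\omega}$ of finite linear orders with limit $\mc L$, normalized so that at most one point enters or leaves at each stage and so that the (stable) least element of $\mc L$ is, from some stage on, the $\peqai$-least point; I reserve the number $0$ as the bottom of the block attached to this element, which will be the required least element of the output order. The target order $\mathcal M$ is assembled from disjoint finite ``blocks,'' one for each point currently active in $\mc A_i$, the blocks being arranged in the $\peqai$-order of their labels and each block being an initial segment of $\omega$ under the successor relation; at every stage I append one fresh top element to each active block, so that a point that is eventually stable accumulates infinitely many block-elements and hence an $\omega$-block in the limit.

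The delicate part is the treatment of points that appear in the approximation but are not in $\mc L$, whose blocks remain finite and must be disposed of without corrupting the order type. Since appending a finite set on top of an $\omega$-block changes its order type, whereas prepending one does not ($n+\omega=\omega$), I would dispose of the block of a departing point $a$ upward: using the output successor relation, I declare the top of $a$'s finite block to be the predecessor of the bottom of the block of the $\peqai$-successor of $a$, thereby merging $a$'s finite chain into the block immediately above it. Because the linear order on already-created elements is never altered, this merge is consistent with the permanence required of an $X$-computable order, and each maximal successor-chain of $\mathcal M$ becomes an $\omega$-block whose successor-equivalence class corresponds to a single stable point of $\mc L$. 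The correctness statement to verify is then that, in the limit, the successor-equivalence classes of $\mathcal M$ are exactly the $\omega$-blocks of stable points and that the induced quotient order is isomorphic to $\mc L$; all of this is formalizable in $\mathsf{ACA_0}$ by \cref{lem:limitcomp} and \cref{cor:limloccoherentsucc}, which let me read off the successor relation together with the order as a genuine limit of finite data.

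The main obstacle---and precisely the point at which an earlier version of this construction was gapped---is a departing point $a$ that is $\peqai$-maximal at the moment it leaves, for then there is no successor block into which its finite chain can be merged; if $\mc L$ has a greatest element, this can strand a finite chain above the topmost $\omega$-block, yielding order type $\omega\cdot\mc L+n$ rather than $\omega\cdot\mc L$. I expect this to be the crux of the argument. To resolve it I would drive the construction by the output successor relation via \cref{cor:limloccoherentsucc} and control the placement of new blocks so that every potentially transient block is guaranteed an $\omega$-block above it into which it can eventually be merged: when $\mc L$ has no greatest element this is automatic, since any position is eventually overtaken by a stable point, and when $\mc L$ has a greatest element the stable maximum is maintained as a distinguished top block so that no transient point is ever realized strictly above it. Verifying that this bookkeeping indeed produces, in the limit, each successor-equivalence class of order type $\omega$ with quotient $\mc L$ is the technical heart of the proof.
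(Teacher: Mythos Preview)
Your setup via \cref{cor:limloccoherent} and the block construction are sound, and you correctly identify the crux: what to do with a departing point that is currently $\peqai$-maximal. However, your proposed resolution has a genuine gap. You write that ``when $\mc L$ has a greatest element the stable maximum is maintained as a distinguished top block so that no transient point is ever realized strictly above it,'' but this would require knowing in an $X$-computable way which element is the greatest---indeed, whether $\mc L$ has a greatest element at all. Neither is available: $\mc L$ is only $\Delta^0_2(X)$, so at any finite stage a transient point appearing above the true maximum is indistinguishable from a new genuine maximum. Concretely, take $\mc L=\{0\}$ with an approximation in which a transient point $a$ enters above $0$ and later leaves. Its finite block $B_a$ sits above the then-existing elements of $B_0$; when $a$ departs there is nothing above to absorb $B_a$, and since fresh top elements of $B_0$ are placed just above the old top of $B_0$ (hence below $B_a$, as the already-fixed order must be preserved), the output has order type $\omega+n$ rather than $\omega$. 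Your case ``$\mc L$ has no greatest element'' is likewise underspecified: at the moment of departure there may still be nothing above, and you do not describe a delayed-merge mechanism; invoking \cref{cor:limloccoherentsucc} does not supply one.

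The paper's proof avoids the problem by merging in the opposite direction. When the approximation changes, the labels of all affected blocks are reassigned to the nearest element to the \emph{left} in the surviving part of the approximation, and the partial successor relation is set to glue the old top of that left block to the bottom of the relabeled block. Because the hypothesis guarantees a least element, normalized so that $0$ is the $\peqai$-least point in every $\mc A_i$, there is always a block below into which to merge; the absorbed finite chain becomes a middle segment of a block that continues to grow to the right and hence has order type $\omega$ in the limit. Thus the ``least element'' hypothesis is exploited precisely to provide a universal sink for \emph{downward} merges; your upward-merge strategy forfeits this anchor and cannot be repaired without non-computable information about the top of $\mc L$.
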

\begin{proof}
Assume without loss of generality that $0$ is the least element in $\mc L$. As
  $\mc L$ is $\Delta^0_2(X)$, by \cref{cor:limloccoherent} there is
  a uniformly $X$-computable locally coherent sequence $(\A_i)_{i\in\omega}$ of linear orders 
  with limit $\mc L$. We may furthermore assume that $\A_i$ does not contain elements greater than $i$ and that each $\A_i$ has the correct information about $0$.

We define $\mc L'$  of order-type $\omega\cdot \mc L$ as the union
  $\bigcup_{s\in\mathbb{N}} \mc L_s'$ of an $X$-computable sequence of
  finite linear orders $\mc L_0'\subseteq \mc L_1'\subseteq \ldots$ where $\mc L_i'$ has partial successor relation $S_i$. Together
  with $\mc L_s'$ we define an $X$-computable sequence of order-preserving surjections
  $f_s\colon \mc L_s'\to \mc A_s$ with the goal that $f=\lim f_s: \mc L' \to \mc L$ is a surjective order-preserving function labeling the ``$\omega$-blocks'' of $\mc L'$ with elements of $\mc L$.

We put $\mc L_0'= \mc A_0$ and let $f_0$ be the identity function. We will now define $\mc L_{s+1}'$ and $S_{s+1}$ as extensions of $\mc L_s'$ and $S_s$ respectively and $f_{s+1}$ given $f_s$ as follows. During the construction, we will add fresh elements to $\mc L_{s+1}'$ where a natural number $n$ is a \emph{fresh element} if $n>s+1$ and has not been used in the construction before.
\begin{enumerate}
  \item If there is a least natural number $a_0\in A_{s}$ such that the subordering of $\mc A_s$ consisting of natural numbers less than or equal to $a_0$ is not a subordering of $\mc A_{s+1}$, then let $\mc A'$ be the suborder of $\mc A_{s+1}$ 
  consisting of natural numbers less than $a_0$ and go to (2). Otherwise, let $\mc A'=\mc A_{s}$ and go directly to (3).
  \item For all elements $b\in A_{s}$ such that $b\geq a$ and all $x\in L_{s}'$ with $f_s(x)=b$, let $f_{s+1}(x)$ be the next element to the left of $b$ in $\mc A'$. If there are successive elements $x,y\in L_{s}'$ with $f_{s+1}(x)=f_{s+1}(y)$ for which $S_{s}$ is undefined, set $S_{s+1}(x,y)$.
  \item For each element $a\in A_{s+1}\setminus A'$ we add to $\mc L_{s+1}'$ a fresh element $x_a$ such that $f_{s+1}(x_a)=a$. We compare $x_a$ with other elements $y\in \mc L_{s+1}'$ as follows: $x_a\preceq^{\mc L'_{s+1}}y$ iff $a\preceq^{A_{s+1}}f_{s+1}(y)$.
  \item At last, grow every labeled block by an element to the right. I.e., for every $a \in L_{s+1}'$ such that for all $b\succeq^{\mc L_{s+1}'} a$, $f_{s+1}(b)\neq f_{s+1}(a)$, add a fresh element $x_a$, set $f_{s+1}(x_a)=f_{s+1}(a)$, $S_{s+1}(a,x_a)$ and define $\preceq^{\mc L_{s+1}'}$ accordingly.
\end{enumerate}
This finishes the constructions of $\mc L_{s+1}'$, $S_{s+1}$ and $f_{s+1}$. Let $\mc L'=\bigcup_{s\in\omega} \mc L_s'$, $f=\lim f_{s}$ and $S=\bigcup_{s\in\omega} S_s$.

\emph{Verification:}
First, note that $\mc L'$ is computable. For any two elements $x,y\in L'$ with $m=\max\{x,y\}$, we have by construction that\[  x\preceq^{\mc L'}y \iff x\preceq^{\mc L'_m} y.\] For the successor relation, the same reasoning applies except that at step (2) of the construction, we define the relation between elements added at previous stages. However, note that if at stage $s+1$, $S_{s+1}(x,y)$ is defined by step (2), then the element $x$ must be the rightmost element of its "labeled block" and thus it was added by step (4) at stage $s$. Hence, for all $x,y\in L'$ with $m=\max\{x,y\}$ \[S(x,y)\iff S_{m+1}(x,y).\] 
Next, note that $f$ is well-defined. Whenever $f_{s+1}(x)\neq f_{s}(x)$, by construction $f_{s+1}(x)<f_{s}(x)$ and hence $f_s(x)$ stabilizes in the limit for any $x\in L'$. To see that $range(f)=L$, first suppose that $a\in L$. By properties of $(\mc A_i)_{i\in\omega}$, there is a least $i$ such that for all $j>i$, $\mc A_i\restrict a=\mc A_j\restrict a$. By construction $f_{i+1}(x)=a$ for some $x\in L_{i+1}$ and $f_{j}(x)=a$ for all $j>i+1$. On the other hand, suppose that $a\in range(f)\setminus L$. Then there is $i$ such that for all $j>i$, $\mc A_i\restrict a=\mc A_j\restrict a$. Thus, for all $j>i$, $a\not\in range(f_{j})$, a contradiction. That $f$ is order-preserving follows trivially from the construction. It remains to show that the interval of elements labeled with $a\in L$ is of order-type $\omega$. To see this let $i$ be such that there is $x\in L_i'$ with $f_j(x)=a$ for all $j>i$. After stage $i$, the interval of elements with $f(x)=a$ will grow to the right by at least one and at most finitely many elements at each stage, but no elements will be added to the left or in between elements in the interval. Thus the order-type of each labeled interval is $\omega$. As $f$ is order-preserving and onto $\mc L$, it follows that $\mc L'$ has order-type $\omega\cdot \mc L$ as required.

\end{proof}
\begin{lemma}[$\mathsf{ACA_0}$]\label{lem:jumpinv2}
  Fix~$X$~and let $\mc L$ be a $\Delta^0_2(X)$ linear order of order-type $\omega\cdot\mc K$ for some linear order $\mc K$. Further, assume that $\mc L$ has $0$ as its least element and that the successor relation in $\mc L$ is $\Delta^0_2(X)$. Then there is an $X$-computable linear order isomorphic to $\mc L$ with $0$ as its least element.
\end{lemma}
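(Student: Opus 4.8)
The plan is to produce the copy by the same kind of finite approximation used in \cref{lem:jumpinv1}, only now building a \emph{bijective} labelling rather than an $\omega$-to-one one. Since $\mc L$ together with its successor relation $S^{\mc L}$ is $\Delta^0_2(X)$, \cref{cor:limloccoherentsucc} furnishes an $X$-computable locally coherent sequence $(\mc A_s)_{s\in\omega}$ of finite linear orders with partial successor relations $S_s$ whose limit is $\mc L$ together with $S^{\mc L}$; as before we may assume that no $\mc A_s$ has an element above $s$ and that each $\mc A_s$ has correct information about the least element $0$. Two structural features of $\mc L\cong\omega\cdot\mc K$ will drive the argument. First, the blocks of $\mc L$---the classes of ``finite $S^{\mc L}$-distance''---each have order-type $\omega$, so every block has a least element and unbounded room to the right. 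Second, by the remark preceding \cref{cor:limloccoherent}, once two neighbours $x,y$ with $S^{\mc L}(x,y)$ have both stabilised in the $\mc A_s$, no element ever again appears strictly between them; thus a settled successor-gap is never reopened.

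I would build $\mc M=\bigcup_s\mc M_s$ as an increasing union of finite linear orders, never deleting an element, while maintaining at each stage an order-preserving injection $h_s\colon M_s\to A_s$ that carries the finite ``blocks'' of $\mc M_s$ onto $S_s$-chains of $\mc A_s$. New elements enter $\mc M$ in two ways: when the current approximation presents an element with no $S_s$-predecessor, I open a fresh block of $\mc M$ in the position it occupies in $\mc A_s$, and when it presents a new $S_{s+1}$-successor of a matched element, I extend the corresponding block of $\mc M$ one step to the right and match the new element to it. Because each true block really is an $\omega$-chain, its successor edges appear and eventually stabilise, so the corresponding block of $\mc M$ grows without bound and attains order-type $\omega$ in the limit. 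When the approximation changes I repair $h_s$: if an element of $A_s$ that carried some $\mc M$-elements disappears, I re-match those $\mc M$-elements to the least currently available genuine targets lying to their right, preserving order and block membership; and if a new edge $S_{s+1}(a,b)$ reveals that an apparent block-start $b$ is in fact the successor of $a$, I amalgamate the block of $b$ onto that of $a$, re-homing rightward any elements that had been attached to $a$ under a successor edge that this insertion has now destroyed.

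Computability of $\mc M$ is immediate, exactly as in \cref{lem:jumpinv1}: the comparison of two elements, and the successor relation between them, are decided at the first stage by which both are present and are never revised. The real work---and the step I expect to be the main obstacle---is to show that the matching converges pointwise, so that $h=\lim_s h_s$ is a genuine order-isomorphism of $\mc M$ onto $\mc L$. Here both structural features are essential: the $\omega$-slack of each block guarantees that a displaced $\mc M$-element always has a genuine point of $\mc L$ to its right to be re-homed to, while the no-reopening property of settled successor-gaps guarantees that no committed element ever becomes permanently stranded between two true neighbours. A true-stage argument along $(\mc A_s)$---running over the stages at which the initial segment of $\mc L$ below a given bound has reached its final form---then shows that each element of $\mc M$ is re-homed only finitely often and that every element of $\mc L$ is eventually and permanently in the range of $h$, so that $h$ is the desired bijection; order preservation is built into the construction. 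The bookkeeping for the block-amalgamation case is the most delicate point, and it is precisely the $\Delta^0_2(X)$ successor relation that keeps each amalgamation finite and ultimately rare.
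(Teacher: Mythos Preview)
Your plan is workable but takes a considerably harder route than the paper. You aim to maintain order-preserving \emph{injections} $h_s\colon M_s\to A_s$ and argue that $h=\lim_s h_s$ is a bijection onto $\mc L$; this forces the block-amalgamation and rightward re-homing machinery, together with a true-stage argument for convergence that you yourself flag as the main obstacle. The paper sidesteps all of this by relaxing the target: it builds order-preserving \emph{surjections} $f_s\colon\mc L_s'\to\mc A_s$, where a label $f_s(x)$ that vanishes from $A_{s+1}$ is simply replaced by the $\preceq^{\mc A_s}$-largest survivor in $A_s\cap A_{s+1}$ below it, and a fresh element $x_a$ is added for each $a\in A_{s+1}$ not already in the range of $f_{s+1}$. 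The verification then only needs that each fibre $B_a=\bigcup_{s\ge s_a}f_s^{-1}(a)$, for $a\in\mc L$, is a finite non-empty interval---finiteness uses precisely the successor-stabilisation observation you make, non-emptiness is immediate from the construction. Since replacing every point of $\omega\cdot\mc K$ by a finite non-empty block leaves the order-type unchanged, $\mc L'\cong\mc L$ follows without ever exhibiting the isomorphism. Your bijective labelling would of course give more---an explicit isomorphism---but at the cost of the delicate bookkeeping; the paper's many-to-one labelling absorbs all the instability of the approximation, and the elementary absorption $k+\omega=\omega$ does the clean-up at the end.
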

\begin{proof}
  As $\mc L$ is $\Delta^0_2(X)$, by \cref{cor:limloccoherentsucc} there is
  a uniformly $X$-computable locally coherent sequence of finite linear orders
  $(\A_i)_{i\in\omega}$ with partial successor relation and $\mc L$ as its
  limit. Furthermore, we can assume without loss of generality that $0$ is the least element of all $\A_i$.
  
  We build an $X$-computable linear order $\mc L'$ isomorphic to $\mc L$ as the
  union of a sequence of extending finite orders $\mc L_s'$. As in the proof of
  \cref{lem:jumpinv1} we also define order-preserving surjections $f_s: \mc L'_s\ra \A_s$.

  We put $\mc L_0'=\mc A_0$ and let
  $f_0$ be the identity function. Assume we have defined $\mc L_s'$ and $f_s$.
  For elements $x \in \mc L_{s}'$ we put
  \[f_{s+1}(x)=\max_{\preceq^{A_{s}}}\{a\in A_{s+1}\cap A_s\mid a\preceq^{\mc A_{s}}f_s(x)\}.\]
  The additional elements of $\mc L_{s+1}'$ are $x_a$ for $a\in \mc A_{s+1}$ such that there are no $y\in \mc L_s'$ with $f_{s+1}(y)=a$. We put $f_{s+1}(x_a)=a$ and insert $x_a$ in $\mc L_{s+1}'$ above all $y$ with $f_{s+1}(y)\prec^{\mc A_{s+1}}a$ and below all $y$ with $f_{s+1}(y)\succ^{\mc A_{s+1}}a$.
  
  This finishes the construction. Let $\mc L'=\lim_s \mc L_s'$.

  \emph{Verification:} Clearly $\mc L'$ is $X$-computable. It remains to show
that $\mc L'\cong \mc L$. First of all notice that the order $\mc L'$ splits
into the blocks $B_a$, for $a\in \mc L$, where $B_a=\bigcup\limits_{s\ge
s_a}f_s^{-1}(a)$ and $s_a$ is the first stage such that
$a\in\bigcap\limits_{s\ge s_a} A_s$. In order to finish the proof it is enough
to show that all $B_a$ are finite and non-empty. For a given $a\in \mc L$
consider its successor $b$. In all the orders $\mc A_{s}$, for $s\ge
\max(s_a,s_b)$, the element $b$ is the successor of $a$ according to $S^{\mc A_s}$. Thus no elements will appear in between $a$ and $b$ in the orders $\mc A_s$. Therefore, no new elements will be added to $B_a$ after the stage $\max(s_a,s_b)$, and hence $B_a$ is finite. The block $B_a$ is non-empty since by construction $f^{-1}_{s_a}(a)=\{x_a\}$.\end{proof}

Combining \cref{lem:jumpinv1,lem:jumpinv2} we get a formal version of a special
case of a theorem
due to Ash~\cite{ash1991} as stated in~\cite[Theorem 9.11]{ash2000}: If a linear
order $\mc L$ has an isomorphic copy that is $\Delta^0_3$, then $\omega\cdot \mc L$ has a
computable copy. The proof of this in two steps, first applying
\cref{lem:jumpinv1} with $X=\emptyset'$ and then using \cref{lem:jumpinv2}, was
communicated to us by Andrey Frolov.

\begin{proof}[Proof of \cref{thm:akwo}]
  If $X$ is $\Pi_{2n+1}$, then, for some $\Sigma_{2n}$ set $Y$
  \[ x\in X \lr \forall y \langle x,y\rangle \in Y.\]
  We can now build a uniformly $\Delta_{2n}$ sequence of linear orders $(\mc C_i)_{i\in\omega}$ such that if $x\in X$, then $\mc C_i$ is the linear order only containing the element $0$ and if $x\not \in X$, then $\mc C_i$ has order-type $1+\eta$ where the first element is $0$. The construction is again in stages where $\mc C_{i,s}$ contains only one element $0$ if for all $y<s$ $\langle x,y\rangle\in Y$ and otherwise we build a copy of $\eta$ after the $0$.
  The structure $\mc C_{i}=\lim_s \mc C_{i,s}$ clearly is as required.

  Having defined $(\mc C_i)_{i\in\omega}$ we apply \cref{lem:jumpinv1} and then to the resulting sequence \cref{lem:jumpinv2} and so on, until, after $n$ repetitions of this process we get a computable sequence of linear orders $\mc L_i$ such that
  \[ \mc L_i \cong \begin{cases}
   \omega^n & \text{ if }i\in S\\
   \omega^n(1+\eta) &\text{ if } i\not\in S.
 \end{cases}\]
 Of course, the way \cref{lem:jumpinv1} and \cref{lem:jumpinv2} are stated, we can not do this right away. We require more uniformity. However, looking at the proof of \cref{lem:jumpinv1} the only step that is not uniform is the assumption that the first element is $0$. However, in our case, this is no obstacle, as we have defined $(\mc C_i)_{i\in\omega}$ so that all $\mc C_i$ have $0$ as its first element.
\end{proof}

\begin{corollary}\label{sen2wo} For any $\Pi_{2n+1}$ sentence $\varphi$ there exists a computable linear order $\mc L$ such that $\mathsf{ACA}_0$ verifies the following: 
  \begin{enumerate}
     \item  $\mc L$ is a linear order;
  \item $\mc L\cong \omega^n$ if $\varphi$;
  \item $\mc L\cong \omega^n(1+\eta)$, if $\lnot \varphi$;
  \item $\varphi\mathrel{\leftrightarrow}\mathsf{WO}(\mc L)$.
  \end{enumerate}
\end{corollary}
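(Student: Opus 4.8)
The plan is to derive \cref{sen2wo} directly from \cref{thm:akwo} by viewing the single sentence $\varphi$ as a degenerate $\Pi_{2n+1}$ formula. First I would fix a $\Pi_{2n+1}$ sentence $\varphi$ and consider the formula $\varphi(x)$ with a dummy variable $x$ that simply ignores $x$ and asserts $\varphi$. Applying \cref{thm:akwo} to this formula yields, provably in $\mathsf{ACA}_0$, a uniformly computable sequence $(\mc L_i)_{i\in\omega}$ with $\mc L_i\cong\omega^n$ when $\varphi(i)$ holds and $\mc L_i\cong\omega^n(1+\eta)$ when $\lnot\varphi(i)$ holds. Since $\varphi(x)$ does not depend on $x$, the condition $\varphi(i)$ is equivalent to $\varphi$ for every $i$; hence I may set $\mc L=\mc L_0$ (any fixed index works), obtaining a single computable linear order rather than a sequence.

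Next I would record that $\mathsf{ACA}_0$ verifies items (1)--(3) as immediate instances of the conclusion of \cref{thm:akwo} specialized to $i=0$: linearity of $\mc L$ is part of the assertion that $\mc L_i$ is a linear order, and the two isomorphism cases are exactly the two branches of the displayed case split, with $\varphi(0)$ replaced by $\varphi$. The only genuinely new content is item (4), the equivalence $\varphi\lr\mathsf{WO}(\mc L)$. Here I would argue inside $\mathsf{ACA}_0$ by cases. If $\varphi$ holds, then by (2) $\mc L\cong\omega^n$, and since $\mathsf{ACA}_0$ proves that $\omega^n$ is a well-order (finite towers of ordinal exponentiation applied to $\omega$ are provably well-ordered in $\mathsf{ACA}_0$) and that well-orderedness is isomorphism-invariant, we get $\mathsf{WO}(\mc L)$. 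Conversely, if $\lnot\varphi$ holds, then by (3) $\mc L\cong\omega^n(1+\eta)$; since the dense order $\eta$ embeds into $\mc L$, the copy of $\omega^n(1+\eta)$ contains an infinite descending sequence (pick a strictly decreasing sequence in the $\eta$ part, scaled by the $\omega^n$ factor), so $\mathsf{ACA}_0$ refutes $\mathsf{WO}(\mc L)$. Contraposing both directions gives the biconditional (4).

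The main obstacle, such as it is, lies in making the two well-orderedness verifications uniform and fully formalizable in $\mathsf{ACA}_0$: one must check that $\mathsf{ACA}_0$ indeed proves $\mathsf{WO}(\omega^n)$ for each fixed standard $n$ and that it proves $\lnot\mathsf{WO}(\omega^n(1+\eta))$, and that both facts transfer across the $\mathsf{ACA}_0$-provable isomorphism supplied by \cref{thm:akwo}. The positive direction uses that for each concrete $n$ the statement $\mathsf{WO}(\omega^n)$ is provable in $\mathsf{ACA}_0$ (this is far below the $\mathsf{WO}(\varepsilon_0)$ strength that $\mathsf{ACA}_0$ lacks), while the negative direction only needs the trivial observation that a suborder isomorphic to $\eta$ witnesses an infinite descending chain. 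Since \cref{thm:akwo} already delivers the isomorphisms with $\mathsf{ACA}_0$-provability built in, transferring $\mathsf{WO}$ along them is routine, and I expect the proof to be short. I would conclude by noting that the resulting $\mc L$ is computable and, being $\mathsf{ACA}_0$-provably a linear order, is in particular $\mathsf{PA}$-verifiable, which is the form in which \cref{sen2wo} feeds into the applications.
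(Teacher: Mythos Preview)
Your proposal is correct and follows exactly the paper's approach: treat $\varphi$ as a formula $\varphi(x)$ with a dummy variable, apply \cref{thm:akwo}, and take $\mc L=\mc L_0$. The paper's proof is in fact just this one line, leaving the derivation of item (4) from items (2) and (3) implicit; your additional verification of (4) via the well-orderedness of $\omega^n$ and the ill-foundedness of $\omega^n(1+\eta)$ is the intended (routine) filling-in of that gap.
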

\begin{proof} We consider $\varphi$ as $\varphi(x)$, then apply \cref{thm:akwo} to it and finally put $\mc L=\mc L_0$.
\end{proof}

%
Combining Lemma \ref{TI_cons} and Corollary \ref{sen2wo} we get
\begin{corollary} \label{TI_Scheme_theory}For any $\Pi_{2n+1}$ sentence $\varphi$ there exists a computable linear order $\mc L$ such that
  \begin{enumerate}
  \item $\mathsf{PA}$ proves that $\mc L$ is a linear order;
  \item $\mc L\cong \omega^n$ if $\varphi$ is true;
  \item $\mc L\cong \omega^n(1+\eta)$, if $\varphi$ is false;
  \item $\mathsf{PA}+\varphi\equiv \mathsf{PA}+\mathsf{TI}(\mc L)$.
  \end{enumerate}
\end{corollary}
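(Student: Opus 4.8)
The plan is simply to combine the two quoted results while tracking first-order consequences carefully. First I would apply Corollary \ref{sen2wo} to the $\Pi_{2n+1}$ sentence $\varphi$, obtaining a computable linear order $\mc L$ for which $\mathsf{ACA}_0$ verifies items (1)--(4) of that corollary. Items (2) and (3) of the present corollary are then immediate, since they literally coincide with items (2) and (3) of Corollary \ref{sen2wo}. For item (1), note that ``$\mc L$ is a linear order'' is the first-order statement $\mathsf{LO}(\mc L)$; since $\mathsf{ACA}_0$ proves it and $\mathsf{PA}$ is exactly the set of first-order consequences of $\mathsf{ACA}_0$, we get $\mathsf{PA}\vdash\mathsf{LO}(\mc L)$, i.e. $\mc L$ is a $\mathsf{PA}$-verifiable computable linear order. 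This is precisely the hypothesis needed to invoke Lemma \ref{TI_cons}.

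The remaining work is item (4), where I read $\equiv$ as ``proves the same first-order sentences''. I would establish this by chaining three identifications of sets of first-order consequences. By Corollary \ref{sen2wo}(4), $\mathsf{ACA}_0\vdash\varphi\lr\mathsf{WO}(\mc L)$, so the theories $\mathsf{ACA}_0+\varphi$ and $\mathsf{ACA}_0+\mathsf{WO}(\mc L)$ prove exactly the same sentences, in particular the same first-order sentences. By Lemma \ref{TI_cons}, the first-order consequences of $\mathsf{ACA}_0+\mathsf{WO}(\mc L)$ are exactly the theorems of $\mathsf{PA}+\mathsf{TI}(\mc L)$. It therefore remains only to identify the first-order consequences of $\mathsf{ACA}_0+\varphi$ with the theorems of $\mathsf{PA}+\varphi$.

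For this last identification I would use that $\mathsf{ACA}_0$ is conservative over $\mathsf{PA}$ for first-order sentences, together with the fact that $\varphi$ is itself first-order. Indeed, if $\psi$ is a first-order sentence and $\mathsf{ACA}_0+\varphi\vdash\psi$, then $\mathsf{ACA}_0\vdash\varphi\to\psi$; as $\varphi\to\psi$ is first-order, conservativity gives $\mathsf{PA}\vdash\varphi\to\psi$, hence $\mathsf{PA}+\varphi\vdash\psi$, and the converse inclusion is trivial. Chaining the three identifications yields that $\mathsf{PA}+\varphi$ and $\mathsf{PA}+\mathsf{TI}(\mc L)$ have the same first-order consequences; since both are first-order theories, this is exactly the asserted equivalence $\mathsf{PA}+\varphi\equiv\mathsf{PA}+\mathsf{TI}(\mc L)$.

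The argument is essentially bookkeeping, so I do not expect a serious obstacle. The one point deserving a moment's care is the preservation of conservativity after adjoining the first-order axiom $\varphi$; this is why I would spell out the $\varphi\to\psi$ deduction explicitly rather than cite ``conservativity of $\mathsf{ACA}_0+\varphi$ over $\mathsf{PA}+\varphi$'' as a black box.
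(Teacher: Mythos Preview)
Your proposal is correct and follows exactly the approach the paper indicates: the paper's proof is literally the one-line remark ``Combining Lemma \ref{TI_cons} and Corollary \ref{sen2wo} we get'', and you have simply spelled out that combination, including the small observation that conservativity of $\mathsf{ACA}_0$ over $\mathsf{PA}$ is preserved when adjoining the first-order sentence $\varphi$.
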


Now we are ready to prove Theorem \ref{upperbound}

\upperbound*
\begin{proof}
    We use Corollary \ref{TI_Scheme_theory} to obtain computable $\mathsf{PA}$-verifiable linear order $\mc L_1\simeq \omega^n$ such that $\mathsf{PA}+\mathsf{TI}(\mc L_1)\vdash \varphi$. Next, we consider the order $\mc L$ obtained from $\mc L_1$ by adding a new greatest element, i.e., $\mc L\simeq \omega^n +1$. By Lemma \ref{reflection_implies_TI} we see that $\mathsf{RFN}^{\mc L}(\mathsf{PA})$ proves all instances of $\mathsf{TI}(\mc L_1)$ and thus $\varphi$, which concludes the proof of Theorem \ref{upperbound}.
\end{proof}

\section{Lower bound for Feferman's completeness}\label{lowerbounds_sect}

In order to provide lower bounds for Feferman's completeness theorem we will use
the fact that for any  computable linear ordering $\mc
L$ and $n\ge 1$ there is a $\Sigma_{2n}$ formula $\mc L{\upharpoonright}_x
{\hookrightarrow}_n y$ such that for any $a\in \mc L$ and $\alpha<\omega^n$
(represented as a Cantor normal form term) the formula $\mc L{\upharpoonright}_x
{\hookrightarrow}_n \alpha$ expresses that the initial segment of $\mc L$ up to
$\alpha$ embedds into $\alpha$. We will furthermore need the following property being $\mathsf{PA}$-verifiable for $\mathsf{PA}$-verifiable linear orders $\mc L$:
\begin{equation}\label{emb_property}
(\forall \alpha<\omega^n)(\forall a\in \mc L)((\mc L{\upharpoonright}_a {\hookrightarrow}_n \alpha)\mathrel{\leftrightarrow}(\forall b\prec^{\mc L} a)(\exists \beta<\alpha)  (\mc L{\upharpoonright}_b {\hookrightarrow}_n \alpha))    
\end{equation}

We will do this by defining for $n\ge 0$  more general $\Sigma_{2n}$ formulas
$[x,y)^{\mc L} {\hookrightarrow}_n z$ such that for $a\preceq^{\mc L} b$ and
$\alpha<\omega^n$ the formula $\mc L{\upharpoonright} [a,b) {\hookrightarrow}_n
\alpha$ expresses that the interval $[a,b)^{\mc L}$ in $\mc L$ embeds into $\alpha$. Then we put $\mc L{\upharpoonright}_x {\hookrightarrow}_n y$  to be \[(\exists z\in \mc L)((\forall w\in\mc L)(z\preceq^{\mc L} w)\land ([z,x){\hookrightarrow}_n y)).\]

We define this formulas by induction on $n$. For the case of $n=0$, the only
eligible ordinal $\alpha$ is $0$ and thus we simply put $[x,y)^{\mc L}
{\hookrightarrow}_n z$ to be $x=y$. For $n+1$, given an ordinal $\alpha=
\omega^{m_1}+\ldots+\omega^{m_k}$, where all $m_i\le n$, we express the property $[a,b)^{\mc L}{\hookrightarrow}_{n+1} \alpha$ as
\[(\exists c_0\preceq^{\mc L}\ldots\preceq^{\mc L} c_k)\big(c_0=a\land c_k=b\land (\forall i<k)(\forall d\in [c_i,c_{i+1})^{\mc L})(\exists \beta<\omega^{m_i})( [c_i,d){\hookrightarrow}_n \beta)\big),\]
which is a $\Sigma_{2n+2}$ formula. In a standard manner we go from the above formulas for individual $\alpha$ to a single $\Sigma_{2n+2}$ formula $[x,y)^{\mc L} {\hookrightarrow}_n z$. Verification of (\ref{emb_property}) is also routine.

Let $\mc C$ be the standard order of order-type $\varepsilon_0$, whose elements are nested Cantor normal form terms. For $\alpha<\varepsilon_0$ we put $\mathsf{RFN}^{\alpha}(T)$ to be $\mathsf{RFN}^{(\mc C,\alpha)}(T)$. 

\begin{lemma} \label{refNF} For any $n\ge 1$ and $\mathsf{PA}$-verifiable computable linear order $\mc L$ it is $\mathsf{PA}$-provable that for any $a\in \mc L$ and $\alpha<\omega^n$ we have
  \begin{equation}\label{refNF1}\mathsf{RFN}(\mathsf{RFN}^{\alpha}(T))+(\mc L{\upharpoonright}_{a} {\hookrightarrow}_n \alpha)\supset \mathsf{RFN}(\mathsf{RFN}^{\mc L,a}(T)).\end{equation}
\end{lemma}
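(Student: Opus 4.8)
The plan is to prove \eqref{refNF1} by reflexive induction on $\alpha$ along $\mc C$, that is, by a single application of L\"ob's theorem exactly in the style of the reflexive-induction lemma above. Let $F(\alpha)$ be the arithmetical formalisation of ``\eqref{refNF1} holds for the given $n$, every $\mathsf{PA}$-verifiable computable $\mc L$, and every $a\in\mc L$'', where the inclusion $\supset$ is spelled out as the provability implication $\forall\psi\,(\Prm{\mathsf{RFN}(\mathsf{RFN}^{\mc L,a}(T))}(\psi)\to\Prm{A}(\psi))$ with $A:=\mathsf{RFN}(\mathsf{RFN}^{\alpha}(T))+(\mc L{\upharpoonright}_a{\hookrightarrow}_n\alpha)$. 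It suffices to check that $F$ is $\prec$-reflexively progressive with respect to $\mathsf{PA}$; the reflexive-induction lemma then gives $\mathsf{PA}\vdash\forall\alpha\,F(\alpha)$, which is \eqref{refNF1}.

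So, arguing in $\mathsf{PA}$, I assume $\Prm{\mathsf{PA}}(\goed{\forall\gamma\prec\dot\alpha\,F(\gamma)})$ and fix $\mc L,a$ with $(\mc L{\upharpoonright}_a{\hookrightarrow}_n\alpha)$. Because $\mathsf{PA}$ is provably contained in $\mathsf{RFN}^{\alpha}(T)\subseteq A$, the boxed hypothesis upgrades to $A\vdash\forall\gamma\prec\alpha\,F(\gamma)$; making the induction hypothesis available \emph{inside} $A$ in this way is the whole point of using reflexive rather than ordinary induction, and it is what lets $A$ employ its reflection on the lower stages. To obtain $A\supset\mathsf{RFN}(\mathsf{RFN}^{\mc L,a}(T))$ it is enough to derive in $A$ every axiom of the right-hand theory: the axioms of $T$ (trivial, as $A\supseteq T$); for each $b\prec^{\mc L}a$ the instances of reflection over $\mathsf{RFN}^{\mc L,b}(T)$ (case (ii)); and the instances of reflection over $\mathsf{RFN}^{\mc L,a}(T)$ itself (case (iii)).

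The engine for (ii) and (iii) is the recursive clause \eqref{emb_property}: inside $A$ I fix an embedding witness for $(\mc L{\upharpoonright}_a{\hookrightarrow}_n\alpha)$, assigning to each $b\prec^{\mc L}a$ some $\beta_b\prec\alpha$ with $(\mc L{\upharpoonright}_b{\hookrightarrow}_n\beta_b)$. For (ii), let $\chi$ be an instance of reflection over $\mathsf{RFN}^{\mc L,b}(T)$. The hypothesis $F(\beta_b)$ (legitimate since $(\mc L{\upharpoonright}_b{\hookrightarrow}_n\beta_b)$ holds) shows that $\chi$, being an axiom of $\mathsf{RFN}(\mathsf{RFN}^{\mc L,b}(T))$, is provable in $\mathsf{RFN}(\mathsf{RFN}^{\beta_b}(T))+(\mc L{\upharpoonright}_b{\hookrightarrow}_n\beta_b)$; by the deduction theorem and the provable inclusion $\mathsf{RFN}(\mathsf{RFN}^{\beta_b}(T))\subseteq\mathsf{RFN}^{\alpha}(T)$ (valid because $\beta_b\prec\alpha$) this gives $\Prm{\mathsf{RFN}^{\alpha}(T)}(\goed{(\mc L{\upharpoonright}_b{\hookrightarrow}_n\dot\beta_b)\to\chi})$. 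Now I apply the \emph{outer} reflection of $A$, i.e.\ uniform reflection over the single theory $\mathsf{RFN}^{\alpha}(T)$, to the formula $(\mc L{\upharpoonright}_b{\hookrightarrow}_n y)\to\chi$ (with $y$ in place of $\beta_b$), obtaining $(\mc L{\upharpoonright}_b{\hookrightarrow}_n\beta_b)\to\chi$ and hence $\chi$. The decisive manoeuvre is that the $\Sigma_{2n}$ embedding statement, which cannot simply be ``boxed'', is carried through the reflection as the antecedent of an implication over the fixed theory $\mathsf{RFN}^{\alpha}(T)$ and discharged only at the very end.

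Case (iii) applies the same device to a whole $\mathsf{RFN}^{\mc L,a}(T)$-proof. Given $\Prm{\mathsf{RFN}^{\mc L,a}(T)}(\goed{\theta(\dot x)})$, such a proof invokes only finitely many reflection axioms, over theories $\mathsf{RFN}^{\mc L,b}(T)$ with $b\prec^{\mc L}a$; relocating each through the corresponding $F(\beta_b)$ as in (ii) places the whole proof below one finite level $\gamma=\max_b\beta_b\prec\alpha$, so that $\Prm{\mathsf{RFN}(\mathsf{RFN}^{\gamma}(T))}(\goed{E\to\theta(\dot x)})$, where $E$ is the true conjunction of the relevant embedding facts. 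Since $\gamma\prec\alpha$ gives $\mathsf{RFN}(\mathsf{RFN}^{\gamma}(T))\subseteq\mathsf{RFN}^{\alpha}(T)$, the outer reflection of $A$ over $\mathsf{RFN}^{\alpha}(T)$ applies once more and, after discharging $E$ (available in $A$ through the witness), yields $\theta(x)$. I expect the bookkeeping of (iii) to be the main obstacle: one has to keep every embedding statement in antecedent position so as never to reflect a $\Sigma_{2n}$ sentence in isolation, and one has to see that the one reflection step actually needed lands at a level $\preceq\alpha$ — the boundary case $\gamma$ just below $\alpha$ being covered precisely because $A$ carries reflection \emph{over} $\mathsf{RFN}^{\alpha}(T)$ and not merely the theory $\mathsf{RFN}^{\alpha}(T)$ itself. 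Checking that the deduction theorem, the provable inclusions among the $\mathsf{RFN}^{\beta}(T)$, and the formalised relocation of proofs all go through uniformly in $\mathsf{PA}$ is routine but is where the care lies.
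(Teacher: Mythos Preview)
Your proposal is correct and follows the same overall strategy as the paper: a single application of L\"ob's theorem (packaged as reflexive induction), using the recursive clause \eqref{emb_property} to push each $b\prec^{\mc L}a$ down to some $\beta\prec\alpha$, and then discharging the $\Sigma_{2n}$ embedding statement by carrying it as an antecedent through one application of the outer reflection over $\mathsf{RFN}^{\alpha}(T)$.

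The paper's execution is somewhat tidier in two respects. First, it isolates two general facts, namely $\mathsf{RFN}(U)+\varphi\supseteq\mathsf{RFN}(U+\varphi)$ and $\mathsf{RFN}(U)+(U\supseteq V)\supseteq\mathsf{RFN}(V)$, which reduce the whole inclusion to showing, in $\mathsf{PA}+(\mc L{\upharpoonright}_a{\hookrightarrow}_n\alpha)$, that every \emph{finite fragment} of $\mathsf{RFN}^{\mc L,a}(T)$ is contained in $\mathsf{RFN}^{\alpha}(T)+\varphi$ for some true $\Sigma_{2n}$ sentence $\varphi$. This absorbs your Case~(ii) and Case~(iii) into one step. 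Second, the paper exploits linearity of $\mc L$: any finite fragment of $\mathsf{RFN}^{\mc L,a}(T)$ already lies inside a single $\mathsf{RFN}(\mathsf{RFN}^{\mc L,b}(T))$ for the $\prec^{\mc L}$-largest $b$ occurring, so one needs only one embedding fact $(\mc L{\upharpoonright}_b{\hookrightarrow}_n\beta)$ rather than your conjunction $E$ of variable length. This removes exactly the bookkeeping you flag as the main obstacle: with a single $b$ and $\beta$, the formula $(\mc L{\upharpoonright}_z{\hookrightarrow}_n y)\to\theta(x)$ has a fixed shape, and one instance of uniform reflection over $\mathsf{RFN}^{\alpha}(T)$ suffices. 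Your route works too (coding the tuple of pairs $(b_i,\beta_{b_i})$ into one parameter), but the single-$b$ observation makes it unnecessary.
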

\begin{proof} We fix $n$ and prove this using L\"ob's theorem. We reason in finitistic manner (which here simply means $\mathsf{PA}$-formalizable) and assume that it is $\mathsf{PA}$-provable that for all $\mathsf{PA}$-verifiable computable linear orders $\mc L$, ordinals $\alpha<\omega^n$ and $a\in \mc L$ we have (\ref{refNF1}). We consider some $\mc L$, $\alpha<\omega^n$ and $a\in \mc L$ and claim that (\ref{refNF1}) holds. 

Note that for any c.e.\ $U$ extending $\mathsf{PA}$, we have
$\mathsf{RFN}(U)+\varphi\supseteq \mathsf{RFN}(U+\varphi)$. And that
$\mathsf{RFN}(U)+(U\supseteq V)\supseteq \mathsf{RFN}(V)$. Thus in order to
prove (\ref{refNF1}) it is enough to show that $\mathsf{PA}+(\mc
L{\upharpoonright}_{ a} {\hookrightarrow}_n \alpha) $ proves that any finite
fragment $U$ of $\mathsf{RFN}^{\mc L,a}(T)$ is contained in
$\mathsf{RFN}^{\alpha}(T)+\varphi$ for some true $\Sigma_{2n}$-sentence
$\varphi$ \footnote{We take $\Sigma_{2n}$ here since it is what we need for the
proof. Any other class $\Sigma_{k}$/$\Pi_{k}$ would be eligible, but not
the class of all arithmetical formulas, since we cannot finitistically talk
about the truth of arbitrary arithmetical sentences.}. 

We further reason in $\mathsf{PA}+(\mc L{\upharpoonright}_{a}
{\hookrightarrow}_n \alpha)$ and consider a finite fragment $U$ of
$\mathsf{RFN}^{\mc L,a}(T)$. Notice that $U$ is a subtheory of some
$\mathsf{RFN}(\mathsf{RFN}^{\mc L,b}(T))$, where $b\prec^{\mc L}a$. By
(\ref{emb_property}) we can deduce from $\mc L{\upharpoonright}_{a} {\hookrightarrow}_n \alpha$ that there is $\beta<\alpha$ such that $\mc L{\upharpoonright}_{b} {\hookrightarrow}_n \beta$. We take $\mc L{\upharpoonright}_{b} {\hookrightarrow}_n \beta$ as our $\varphi$. By the premise that (\ref{refNF1}) is already $\mathsf{PA}$-provable we see that $U$ is contained in $\mathsf{RFN}(\mathsf{RFN}^{\beta}(T))+(\mc L{\upharpoonright}_{b} {\hookrightarrow}_n \beta)$ and hence $U$ is contained in $\mathsf{RFN}^{\alpha}(T)+(\mc L{\upharpoonright}_{b} {\hookrightarrow}_n \beta)$.\end{proof}

\lowerbound*
\begin{proof} Indeed by Lemma \ref{refNF} the theory
    \[T=\mathsf{RFN}^\alpha(\mathsf{PA})+\text{``all true
    $\Sigma_{2n}$-sentences''}\] contains $\mathsf{RFN}^{\mc L}(\mathsf{PA})$,
    for any $\mathsf{PA}$-verifiable computable well-ordering $\mc L$ of
     order-type $\le \omega^n$. However, the arithmetically sound
    theory $T$ has a $\Sigma_{2n}$-set of theorems and thus there is a true
    $\Pi_{2n}$ sentence $\varphi$ that is not provable in this theory. Hence
    $\varphi$ can not be proved by any $\mathsf{RFN}^{\mc L}(\mathsf{PA})$ of the considered form. 
\end{proof}

\section{Reflection iterated along elements of $\mc O$} \label{Kleene_O_Section}
Feferman's completeness theorem was based on progressions along notations in
Kleene's $\mathcal O$, but in \cref{upperbound} we iterate reflection along
a computable well-order. There is a subtle difference in that notations in Kleene's
$\mathcal{O}$ yield linear orderings with properties that are generally
not possessed by computable linear orderings. In particular, the successor
relation, the set of limit points, and the end and starting points of such an
ordering are computable. 

A testament that \cref{upperbound} cannot be transferred to work with
progressions along ordinal notation systems is that it gives for every true $\Pi_1$
sentence $\phi$ a computable well-order $\mc L$ of order-type $1$ such that
$\mathsf{RFN}^{\mc L}(\mathsf{PA})\vdash \phi$. Ordinal notation systems such as
Kleene's $\mathcal O$ have a unique notation for finite orderings and thus the
analogous theorem for ordinal notation systems must fail. However, we can still
obtain tight bounds for progressions along ordinal notation systems by using
basic properties of computable linear orderings.

For an arbitrary number $a$ and c.e.\ axiomatizable $T$ extending $\mathsf{PA}$ we define $\mathsf{RFN}^a(T)$ to be
\begin{enumerate}
\item $\mathsf{RFN}^{2^a}(T)=\mathsf{RFN}(\mathsf{RFN}^a(T))$,
\item $\mathsf{RFN}^{3\cdot 5^e}(T)=T+\bigcup \{\mathsf{RFN}(\mathsf{RFN}^a(T))\mid a=\{e\}(n) \text{ for some }n\}$,
\item $\mathsf{RFN}^a(T)=T$, for $a$ that aren't of the forms $3\cdot 5^e$ or $2^{a'}$.
\end{enumerate}
We make this definition precise in a manner fully analogous to what we did in Section \ref{uniform_reflection_section}

\begin{theorem}\label{feferman-notations}
    For every true $\Pi_{2n+1}$ sentence $\phi$ there exists $a\in \mathcal
    O$ with $|a|=\omega^{n+1}+1$ such that
    \[ \mathsf{RFN}^a(\mathsf{PA}) \vdash \phi.\]
\end{theorem}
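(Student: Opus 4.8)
The plan is to reduce \cref{feferman-notations} to the computable-order version \cref{upperbound}, replacing the arbitrary computable well-order $\mc L_1\cong\omega^n$ produced there by a ``nice'' well-order of order-type $\omega^{n+1}$ that genuinely carries a notation in $\mc O$. The extra factor of $\omega$ (taking us from order-type $\omega^n+1$ up to $\omega^{n+1}+1$) is precisely the price one pays for making the successor and limit structure computable, which is demanded of $\mc O$-notations but not of arbitrary computable orders; this is exactly the phenomenon flagged in the opening of \cref{Kleene_O_Section}.

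First I would invoke \cref{TI_Scheme_theory} to fix a computable $\mathsf{PA}$-verifiable linear order $\mc L_1\cong\omega^n$ with $\mathsf{PA}+\mathsf{TI}(\mc L_1)\vdash\phi$. Since $\mc L_1$ is computable, hence $\Delta^0_2$, and has a least element, \cref{lem:jumpinv1} applied with $X=\emptyset$ yields a computable linear order $\mc M\cong\omega\cdot\omega^n=\omega^{n+1}$ with a \emph{computable} successor relation and $0$ as least element, together with the labelling surjection $f=\lim_s f_s\colon\mc M\to\mc L_1$ onto the $\omega$-blocks. Adjoining a single top element to $\mc M$ produces a $\mathsf{PA}$-verifiable computable order of type $\omega^{n+1}+1$.

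Next I would read off from this order a notation $a\in\mc O$ with $|a|=\omega^{n+1}+1$: the computable successor relation supplies the successor notations $2^{(\cdot)}$, and at each limit point the computable block structure lets one select a cofinal $\omega$-sequence, supplying the limit notations $3\cdot 5^e$. One then checks that the notation-indexed theory $\mathsf{RFN}^a(\mathsf{PA})$ proves everything the order-indexed theory $\mathsf{RFN}^{\mc M+1}(\mathsf{PA})$ proves, since at a limit the union of reflections over a cofinal sequence coincides, as a set of theorems, with the union over all predecessors. Granting this, \cref{reflection_implies_TI} gives $\mathsf{RFN}^a(\mathsf{PA})\vdash\mathsf{TI}(\mc M)$. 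Finally, using the arithmetically definable embedding $g\colon\mc L_1\hookrightarrow\mc M$ sending $x$ to the least element of its block, one transfers transfinite induction to conclude $\mathsf{PA}+\mathsf{TI}(\mc M)\vdash\mathsf{TI}(\mc L_1)$, and hence $\mathsf{RFN}^a(\mathsf{PA})\vdash\phi$.

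I expect two places to carry the real weight. The first is the matching of the notation-reflection $\mathsf{RFN}^a$ with the order-reflection $\mathsf{RFN}^{\mc M+1}$: because both definitions are intensional fixed points of $\Sigma_1$ axiom-definitions built by the Diagonal Lemma, one must verify that the cofinal sequences chosen in forming $a$ do not shrink the theory at limits, exactly along the lines of the construction in \cref{uniform_reflection_section}. The second, and I expect harder, obstacle is the final transfer: the labelling $f$ delivered by \cref{lem:jumpinv1} is only a limit, hence $\Delta^0_2$ rather than computable, so although $g$ is arithmetically definable one must still prove, provably in $\mathsf{PA}+\mathsf{TI}(\mc M)$, that $g$ is an order-embedding before the pull-back of induction goes through; here the strength of $\mathsf{TI}(\mc M)$ up to $\omega^{n+1}$ is exactly what makes this verification available. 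An alternative that sidesteps the transfer is to route the domination through \cref{refNF}: taking $\mc L=\mc L_1+1$ together with the true $\Sigma_{2n+2}$ fact $\mc L_1{\hookrightarrow}_{n+1}\omega^n$, that lemma bounds $\mathsf{RFN}(\mathsf{RFN}^{(\mc L,a)}(\mathsf{PA}))$ by reflection along the standard order $\mc C$ up to $\omega^n$ plus this $\Sigma_{2n+2}$ sentence; since $\mc C$ is itself a system of ordinal notations, the remaining work is to absorb that single $\Sigma_{2n+2}$ sentence into reflection of height below $\omega^{n+1}$, which is where the extra power of $\omega$ is again consumed.
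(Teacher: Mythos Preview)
Your outline is correct and follows the same overall strategy as the paper: multiply the $\omega^n$-order on the left by $\omega$ to obtain a computable well-order of type $\omega^{n+1}$ with computable successor and limit structure, extract an $\mc O$-notation from it, and then rerun the argument of \cref{upperbound}. The one substantive difference is that the paper forms the product \emph{explicitly}---defining $\mc L'=\omega\cdot(1+\mc L)+1$ on pairs $(n,m)$ with the obvious lexicographic comparison---rather than invoking \cref{lem:jumpinv1}. Since $\mc L$ is already computable, the general limit construction of \cref{lem:jumpinv1} is unnecessary here, and using it is precisely what downgrades your block-labelling map to $\Delta^0_2$ and creates your second obstacle; with the explicit product the projection $(n,m)\mapsto n$ is computable, so the $\mathsf{TI}$-transfer (equivalently, $\mathsf{ACA}_0\vdash\mathsf{WO}(\mc L)\leftrightarrow\mathsf{WO}(\mc L')$) is immediate. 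For the passage to an $\mc O$-notation the paper cites \cite[Lemma~4.13]{ash2000}, which takes any computable order with computable successor, limit set, and fundamental sequences and returns a computable $g$ with $g(x)\in\mc O$ of the right height whenever the initial segment below $x$ is well-ordered; this packages your first obstacle into a citation. Your alternative route through \cref{refNF} would also work but is more circuitous than either version.
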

\begin{proof}
    Let $\phi$ be a $\Pi_{2n+1}$ sentence and $\mathcal{L}$ be the linear
    ordering obtained in \cref{sen2wo}. Then $\mathsf{ACA}_0$ proves $\mathsf{WO}(\mathcal L)$ if and
    only if $\phi$ is true. In particular, the ordering $\mathcal
    L'=\omega\cdot(1+\mathcal{L})+1$ is a computable ordering such that
    $\mathsf{ACA_0}$ proves $\mathcal L' \cong \omega^{n+1}+1$ if and only if
    $\phi$ is true. We can also assume that $\mathcal L'$ comes with other
    special properties shared with notations in Kleene's $\mathcal{O}$: It has a
    computable successor relation, a computable set of limit points, and there are
    algorithms that compute the first and last elements, predecessors, and fundamental
    sequences for limit points. For example, the ordering defined by
    \[\begin{split} L'&=\{\infty,(-1,m),(n,m):n\in L, m\in \omega\}, \\
        (n_0,m_0)\preceq^{\mathcal{L}'}
        (n_1,m_1) &\iff (n_0\preceq^{\mathcal{L}} n_1\lor n_0=-1) \land m_0\leq
    m_1, \\
\text{ and for all $x$, } x\preceq^{\mathcal{L}'} \infty \end{split}\]
clearly has these properties. It is shown in~\cite[Lemma 4.13]{ash2000} that
there is a computable function $g:L'\to \mathbb N$ such that for every $x\in
L'$,
if ${\preceq^{\mathcal{L}'}}x=\preceq^{\mathcal{L}'}\upharpoonright \{ y\preceq^{\mathcal{L}'} x: y\in
L'\}$ is well-ordered, then $g(x)\in \mathcal O$ and $|g(x)|\cong
{\preceq^{\mathcal{L}'}}x$. If ${\preceq^{\mathcal{L}'}}x$ is not well-ordered, then
$g(x)\not\in \mathcal{O}$. 

Using the same reasoning as in the proof of \cref{upperbound} we now get that if
$\phi$ is true, then $\mathsf{RFN}^{g(\infty)}(\mathsf{PA})\vdash \phi$, and
$|g(\infty)|=\omega^{n+1}+1$.
\end{proof}
\begin{corollary}[Feferman's original completeness theorem with improved bounds]
    For any true arithmetical sentence $\phi$, there exists $a\in \mathcal O$
    with $|a|<\omega^\omega$ such that $\mathsf{RFN}^{a}(\mathsf{PA})\vdash
    \phi$. Furthermore, the bound on $|a|$ is tight, i.e., for every
    $\alpha<\omega^\omega$, there is a true arithmetical sentence $\phi$ such
    that for no $a\in \mathcal O$ with $|a|=\alpha$,
    $\mathsf{RFN}^{a}(\mathsf{PA})\vdash \phi$.
\end{corollary}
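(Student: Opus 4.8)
The plan is to prove the Corollary in two halves, an upper bound and a matching lower bound, both reducing to results already established in the excerpt.

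\medskip

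\textbf{Upper bound.} Given a true arithmetical sentence $\phi$, I first observe that every arithmetical sentence is $\Pi_{2n+1}$ for some sufficiently large $n$ (just pad the quantifier prefix). Having fixed such an $n$, I would apply \cref{feferman-notations} verbatim: it produces $a\in\mathcal O$ with $|a|=\omega^{n+1}+1$ such that $\mathsf{RFN}^a(\mathsf{PA})\vdash\phi$. Since $\omega^{n+1}+1<\omega^\omega$, this gives the required notation with $|a|<\omega^\omega$. This half is essentially immediate from the preceding theorem; the only point to check is that the choice of $n$ is legitimate, i.e.\ that every arithmetical formula lies in some level of the hierarchy, which is standard.

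\medskip

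\textbf{Lower bound (tightness).} Fix $\alpha<\omega^\omega$ and pick $n$ with $\alpha<\omega^{n}$. I would invoke the mechanism of \cref{lowerbound} together with \cref{refNF}. The key is that a notation $a\in\mathcal O$ with $|a|=\alpha$ determines, via the standard order on $\mathcal O$ below $a$, a $\mathsf{PA}$-verifiable computable well-order of order-type $\alpha\le\omega^n$, and that $\mathsf{RFN}^a(\mathsf{PA})$ for this notation coincides (as a theory) with $\mathsf{RFN}^{\mc L}(\mathsf{PA})$ for the associated computable well-order $\mc L$. Consequently \cref{refNF} applies and shows that every such $\mathsf{RFN}^a(\mathsf{PA})$ is contained in the fixed arithmetically sound theory
\[
T=\mathsf{RFN}^\alpha(\mathsf{PA})+\text{``all true }\Sigma_{2n}\text{-sentences''},
\]
whose set of theorems is $\Sigma_{2n}$. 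By the diagonal argument used in the proof of \cref{lowerbound}, there is a true $\Pi_{2n}$-sentence $\phi$ (hence a true arithmetical sentence) not provable in $T$, and therefore not provable in any $\mathsf{RFN}^a(\mathsf{PA})$ with $|a|=\alpha$.

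\medskip

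\textbf{Main obstacle.} The delicate step is the identification, for a notation $a\in\mathcal O$ of order-type $\alpha$, of the theory $\mathsf{RFN}^a(\mathsf{PA})$ defined recursively along $\mathcal O$ with a theory $\mathsf{RFN}^{\mc L}(\mathsf{PA})$ to which \cref{refNF} applies. One must verify that the $\mathcal O$-style definition (using the clauses $2^a$ and $3\cdot5^e$) yields a $\mathsf{PA}$-verifiable computable linear order as the index set below $a$, and that at limit notations the fundamental-sequence-based union matches the union over $\prec^{\mc L}$-predecessors used in \cref{refNF}. Since any $a\in\mathcal O$ comes equipped with a computable successor relation and computable fundamental sequences, the induced order below $a$ is a genuine $\mathsf{PA}$-verifiable computable well-order of order-type $\alpha$, so the hypotheses of \cref{refNF} are met; the containment in $T$ then follows exactly as in \cref{lowerbound}. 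I expect the bookkeeping relating the two recursive definitions to be the only nontrivial part, and it is routine once one notes that both definitions agree clause-by-clause on the order below $a$.
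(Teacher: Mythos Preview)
The paper gives no explicit proof of this corollary; it is treated as immediate from the preceding results, and your reconstruction matches that intent: the upper bound is a direct application of \cref{feferman-notations}, and the tightness is meant to come from \cref{lowerbound} via \cref{refNF}.

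Your outline is correct in shape, but you underestimate the ``main obstacle'' you flag. Two points deserve more care than ``routine'':
\begin{itemize}
\item The claim that for an arbitrary $a\in\mathcal O$ the order $<_{\mathcal O}$ below $a$ is a $\mathsf{PA}$-\emph{verifiable} linear order is not automatic. Linearity below $a$ is a true $\Pi_2$ fact, but for pathological limit notations (e.g., ones whose fundamental sequences encode proof searches) $\mathsf{PA}$ need not prove it. So the hypothesis of \cref{refNF} as stated may fail.
\item Even if the two progressions yield the same set of theorems at each level, they are built from different $\Sigma_1$-axiomatizations (one via the Diagonal-Lemma formula $\mathsf{Ax}^{\mc L}_T$, the other via the $2^b$/$3\cdot 5^e$ clauses), and reflection is intensional. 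Equating $\mathsf{RFN}^a(\mathsf{PA})$ with $\mathsf{RFN}^{\mc L}(\mathsf{PA})$ therefore requires showing, level by level, that the axiomatizations are provably coextensive---not just that the theories coincide extensionally.
\end{itemize}
The clean fix is not to identify the two progressions at all, but to rerun the L\"ob argument of \cref{refNF} directly for the $\mathcal O$-clauses: define the embedding predicate $\hookrightarrow_n$ relative to the c.e.\ relation $<_{\mathcal O}$, and prove the analogue of (\ref{refNF1}) with $\mathsf{RFN}^{(\mc L,a)}$ replaced by the Kleene-style $\mathsf{RFN}^a$. That argument needs only the recursive unfolding of the axiomatization of $\mathsf{RFN}^a(T)$, which is available by construction, and does \emph{not} require $\mathsf{PA}$ to verify linearity of the predecessors of $a$. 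With this adjustment your proof of tightness goes through; note also that the paper itself, in its Discussion, regards the finer level-by-level sharpness for $\mathcal O$ as something still to be checked, so some care here is warranted.
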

A similar result was claimed in \cite{fenstad1968}; however, Savitt found a mistake in this proof \cite{savitt1969jens}.

\section{Discussion and Open Questions}
We suspect that it should be easy to
adopt the techniques from \cref{lowerbound} to show sharpness of the upper bound
from \cref{feferman-notations} on the order-types for the iterations of reflection along elements of $\mc O$ necessary to attain the completeness of iterations of reflection for $\Pi_{2n+1}$-sentences.

In the present paper, we have been focusing on the iterations of full uniform reflection,
however, when we want to obtain some true $\Pi_n$-sentence in principle it is
natural to expect that we need to iterate just partial uniform $\Pi_n$
reflection $\mathsf{RFN}_{\Pi_n}$ instead of full uniform reflection
$\mathsf{RFN}$. For example, the existence of such iterations could be easily
proved by combining Feferman's completeness theorem for full uniform
reflection with Schmerl's formula that allows us to transform short
iterations of stronger reflection to longer iterations of weaker $\Pi_n$
reflection, while preserving the set of $\Pi_n$-consequences. We suspect that the methods used in this paper can be adopted to show that for some natural
number constant $C$ and every $n$ we can replace $\mathsf{RFN}$ with
$\mathsf{RFN}_{\Pi_{2n+1+C}}$ in \cref{upperbound}. However, it is not clear to
us whether in \cref{upperbound} full uniform reflection $\mathsf{RFN}$ can be
replaced with just $\mathsf{RFN}_{\Pi_{2n+1}}$, while keeping the bound
$\omega^n+1$ on the order-type.


\printbibliography
\end{document}